\newtheorem{thm}{Theorem}[section]
\newtheorem{lem}[thm]{Lemma}
\newtheorem{cor}[thm]{Corollary}
\newtheorem{conj}[thm]{Conjecture}
\newtheorem{clm}{Claim}
\newtheorem{fact}{Fact}
\def\VEC#1#2#3{#1_{#2},\ldots,#1_{#3}}
\def\DX{\vec{\chi}}
\newcommand{\di}{\displaystyle}
\def\qed{\ifhmode\unskip\nobreak\hfill$\Box$\bigskip\fi \ifmmode\eqno{Box}\fi}
\newcommand{\RR}{\mathbb{R}}
\begin{document}

\title{Dichromatic number and fractional chromatic number}

\author{
  Bojan Mohar\thanks{Supported in part by an NSERC Discovery Grant (Canada),
   by the Canada Research Chair program, and by the
    Research Grant P1--0297 of ARRS (Slovenia).}~\thanks{On leave from:
    IMFM \& FMF, Department of Mathematics, University of Ljubljana, Ljubljana,
    Slovenia.}\\[1mm]
  Department of Mathematics\\
  Simon Fraser University\\
  Burnaby, BC, Canada\\
  {\tt mohar@sfu.ca}
\and
 Hehui Wu\thanks{This work was done while the author was a PIMS Postdoctoral Fellow at the Department of Mathematics, Simon Fraser University, Burnaby, B.C.}\\[1mm]
 Department of Mathematics\\
  University of Mississipi\\
  Oxford, MS\\
  {\tt hhwu@olemiss.edu}
}

\maketitle

\begin{abstract}
The dichromatic number of a graph $G$ is the maximum integer $k$ such that there exists an orientation of the edges of $G$ such that for every partition of the vertices into fewer than $k$ parts, at least one of the parts must contain a directed cycle under this orientation.
In 1979, Erd\H{o}s and Neumann-Lara conjectured that if the dichromatic number of a graph is bounded, so is its chromatic number. We make the first significant progress on this conjecture by proving a fractional version of the conjecture. While our result uses stronger assumption about the fractional chromatic number, it also gives a much stronger conclusion: If the fractional chromatic number of a graph is at least $t$, then the fractional version of the dichromatic number of the graph is at least $\tfrac{1}{4}t/\log_2(2et^2)$. This bound is best possible up to a small constant factor. Several related results of independent interest are given.
\end{abstract}

\section{Introduction}

For an undirected graph $G$, the \emph{chromatic number} $\chi(G)$ is the minimum number of independent sets whose union is $V(G)$. For a directed graph (digraph) $D$, the analogue to independent sets are acyclic vertex-sets, where we call a vertex-set \emph{acyclic} if it does not contain a directed cycle. Then, the \emph{chromatic number} $\chi(D)$ of $D$ is the minimum number of acyclic vertex-sets whose union is $V(G)$ (see \cite{NL82,BFJKM2004}). The \emph{dichromatic number} of an undirected graph $G$, denoted by $\vec{\chi}(G)$, is the maximum chromatic number over all its orientations \cite{ENL,NL82}.

In the late 1970's, Erd\H{o}s and Neumann-Lara \cite{ENL} posed the following conjecture:

\begin{conj}[Erd\H{o}s and Neumann-Lara \cite{ENL}]
\label{conj:ENL}
For every integer $k$, there exists an integer $f_k$, such that if\/ $\chi(G)\ge f_k$, then $\vec{\chi}(G)\ge k$.
\end{conj}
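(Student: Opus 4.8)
\medskip
\noindent\textbf{A strategy toward Conjecture~\ref{conj:ENL}.}
By contraposition, the conjecture asserts that graphs of bounded dichromatic number have bounded chromatic number: if $\vec{\chi}(G)\le k-1$, then $\chi(G)<f_k$ for a suitable $f_k$. The plan is to split this into two stages. The first stage is to prove the \emph{fractional} relaxation, namely that bounded fractional dichromatic number forces bounded fractional chromatic number; this is exactly the main theorem of the present paper, which gives $\vec{\chi}_f(G)\ge\tfrac14\,\chi_f(G)/\log_2(2e\,\chi_f(G)^2)$. Since a fractional parameter never exceeds its integral counterpart, the hypothesis $\vec{\chi}(G)\le k-1$ forces $\vec{\chi}_f(G)\le k-1$, hence $\chi_f(G)=O(k\log k)$. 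After this stage the fractional chromatic number is under control, and what remains is to deduce a bound on $\chi(G)$ itself.

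This second stage carries the real content of the conjecture, and is where a genuinely new idea is needed. For a single graph the ratio $\chi/\chi_f$ is unbounded — the Kneser graph $KG_{3k,k}$ on the $k$-subsets of a $3k$-set has clique number $3$ and, being vertex-transitive, fractional chromatic number $3$, yet chromatic number $k+2$ — so the integral conclusion cannot be obtained from the fractional one by any generic rounding. The extra leverage must come from the full strength of the hypothesis: \emph{every} orientation of $G$, not merely a random one, has a partition into $k-1$ acyclic sets. One plausible approach is to show that a graph with $\chi(G)$ enormous but $\chi_f(G)$ small must contain a sufficiently structured configuration — a large vertex-transitive subgraph, or a Kneser/Schrijver-type or other explicitly describable dense substructure extracted by a Ramsey- or regularity-type argument — on which one can design an orientation whose directed-cycle pattern defeats every partition into $k-1$ acyclic sets, contradicting $\vec{\chi}(G)\le k-1$. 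A complementary, algorithmic route is to run an entropy-compression or Moser--Tardos-type procedure that, starting from a graph admitting no proper colouring with $f_k$ colours, constructs an orientation together with a certificate that it has no partition into $k-1$ acyclic sets.

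The obstacle is exactly this lifting step, and it is why the conjecture remains open: the fractional theorem seems to be the strongest unconditional statement currently within reach, and the probabilistic/LP-duality construction that proves it does not see the difference between $\chi$ and $\chi_f$. It is worth recording that the conjecture already holds for every $\chi$-bounded class of graphs — there large $\chi$ forces a large clique, and $\vec{\chi}(K_n)\to\infty$ while $\vec{\chi}$ is monotone under taking subgraphs — so the whole difficulty sits with graphs that simultaneously have large chromatic number, small clique number, and small fractional chromatic number; the family $KG_{3k,k}$ is the canonical test case, and the conjecture predicts that $\vec{\chi}(KG_{3k,k})\to\infty$, which is itself not known. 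Even the first genuinely open integral instance — producing an orientation with dichromatic number $3$ out of graphs of sufficiently large chromatic number — already appears to require identifying which sparse substructures of high-chromatic-number graphs admit an orientation with no partition into two acyclic sets, and settling this, or its analogue for any fixed $k\ge 3$, would already be substantial progress toward Conjecture~\ref{conj:ENL}.
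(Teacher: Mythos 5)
This statement is a conjecture, not a theorem: the paper explicitly records that Conjecture~\ref{conj:ENL} is open (even the existence of $f_3$ is unknown), and it offers no proof of it. Your text is likewise not a proof but a strategy sketch, and you say so yourself; so the honest verdict is that there is no proof to compare, and the ``gap'' is the entire second stage of your plan. That said, your decomposition is the right one and matches how the paper positions its contribution: stage one (bounded $\vec{\chi}$ implies bounded $\vec{\chi}_f$ implies, by Theorem~\ref{THM:dcn&fcn}, $\chi_f(G)=O(k\log k)$) is exactly what the paper establishes, and stage two --- lifting control of $\chi_f$ to control of $\chi$ --- is precisely what remains open. Your observations that the conjecture holds on $\chi$-bounded classes and that the difficulty concentrates on graphs with large $\chi$ but small $\chi_f$ agree with the paper's own discussion in Sections~1 and~3.

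One concrete correction: you assert that the conjecture's prediction $\vec{\chi}(KG(3k,k))\to\infty$ ``is itself not known.'' It is known, and it is proved in this very paper: Theorem~\ref{thm:DX Kneser} gives $\vec{\chi}(KG(n,k))\ge\lfloor (n-2k+2)/(8\log(n/k))\rfloor$, which for $n=3k$ is $\lfloor (k+2)/(8\log 3)\rfloor\to\infty$. So the canonical test family you single out does not witness the remaining difficulty; the paper's Section~3 handles all Kneser graphs, and (as noted there) every graph with $\chi_f\le q$ maps homomorphically into some $KG(n,k)$ with $n/k\le q$. The genuinely open part is that dichromatic number is not monotone under homomorphisms (only under subgraphs), so resolving Kneser graphs does not resolve the graphs that map into them; any honest account of the obstacle should locate it there rather than in the Kneser family itself.
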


Clearly, $f_1=1$ and $f_2=3$. But it is still an open question whether $f_3$ exists.

Let $\mathcal{I}(G)$ be the family of all independent vertex-sets of $G$. For $v\in V(G)$, let $\mathcal{I}(G,v)$ be the subfamily of all those independent sets that contain $v$. For each independent set $I$, consider a nonnegative real variable $x_I$. Let $\chi_f(G)$ be the minimum value of
$\sum_{I\in\mathcal{I}(G)} x_I$
subject to
\begin{equation}\begin{array}{ll}
\sum_{I\in\mathcal{I}(G,v)} x_I \ge 1 & \mbox{for each vertex $v$},\\[2mm]
x_I\ge 0 & \mbox{for every } I\in\mathcal{I}(G).\\
\end{array}
\end{equation}
It is easy to see that in any optimal solution of this linear program, we have $x_I\le1$ for every $I\in\mathcal{I}(G)$. If we request that the values $x_I$ are integers 0 or 1, we obtain an integer program, whose optimal solution is the chromatic number $\chi(G)$. By this correspondence, we call $\chi_f(G)$ the \emph{fractional chromatic number} of the graph $G$.

The dual of this linear program computes the \emph{fractional clique number} $\omega_f(G)$, a relaxation to the rationals of the integer concept of the clique number. That is, a weighting $w:V\to \RR^+$ of the vertex-set $V=V(G)$ into the set of non-negative real numbers such that the total weight assigned to any independent set is at most 1, and the value $w(V)=\sum_{v\in V} w(v)$ is maximum possible under these conditions.

We can also define the relaxation to the rationals of the integer concept of the independence number. For every weighting $w:V\to \RR^+$ of the vertices of $G$, for which the total weight is $w(V) = \sum_{v\in V} w(v) = |V|$, we consider the maximum weight $w(I)$ over all independent sets $I\in {\mathcal I}(G)$. The minimum of this quantity, taken over all weightings $w$, is denoted by $\alpha_f(G)$ and called the \emph{fractional independence number} of $G$.

By the linear programming duality, we have the following lemma (see, e.g. \cite{GoRo}).

\begin{lem}
$\chi_f(G)=\omega_f(G)$.
\end{lem}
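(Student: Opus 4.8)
The plan is to recognize the two quantities as the optimal values of a primal--dual pair of linear programs and then invoke strong linear programming duality. First I would rewrite the linear program defining $\chi_f(G)$ in the standard form for a minimization problem: minimize $\mathbf{1}^\top x$ over $x\ge 0$ subject to $Ax\ge \mathbf{1}$, where the rows of $A$ are indexed by $V(G)$, the columns by $\mathcal{I}(G)$, and $A_{v,I}=1$ precisely when $v\in I$. The objective and the constraints $\sum_{I\in\mathcal{I}(G,v)}x_I\ge 1$ match this form verbatim.

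Next I would write down the dual of this program: maximize $\mathbf{1}^\top y$ over $y\ge 0$ subject to $A^\top y\le \mathbf{1}$. Unwinding the notation, a feasible $y$ assigns a nonnegative real number $y_v$ to each vertex $v$ with $\sum_{v\in I}y_v\le 1$ for every independent set $I\in\mathcal{I}(G)$, and the objective to be maximized is $\sum_{v\in V}y_v$. This is exactly the description of the fractional clique number $\omega_f(G)$ given in the excerpt (a weighting $w:V\to\RR^+$ with total weight at most $1$ on every independent set, maximizing $w(V)$), so the dual optimum equals $\omega_f(G)$.

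Finally I would verify the hypotheses needed to conclude equality of the two optima. The primal is feasible: take $x_{\{v\}}=1$ for every singleton (each singleton is independent) and $x_I=0$ for all other $I$; then $\sum_{I\in\mathcal{I}(G,v)}x_I\ge 1$ for every $v$. The dual is feasible as well, since $y\equiv 0$ satisfies every constraint. Both programs being feasible, the strong duality theorem for linear programming guarantees that both attain their optima and that the optimal values coincide, which is the claimed identity $\chi_f(G)=\omega_f(G)$.

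There is essentially no obstacle here: this is the standard LP-duality reformulation of the fractional chromatic number, recorded for later use. The only point worth a remark is that $\mathcal{I}(G)$ may be exponentially large in $|V(G)|$, but LP duality holds for finite linear programs regardless of their dimension, so it suffices to note that both programs involve only finitely many variables and constraints, which holds because $G$ is finite.
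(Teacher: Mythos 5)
Your proof is correct and takes exactly the route the paper intends: the paper states this lemma as an immediate consequence of linear programming duality (citing Godsil--Royle) without writing out the details, and your write-up simply makes that standard primal--dual argument explicit, including the feasibility checks needed for strong duality.
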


Analogously, we define the \emph{fractional chromatic number} $\chi_f(D)$ of a digraph $D$ by using acyclic vertex-sets playing the role of ${\mathcal I}(G)$. The maximum of $\chi_f(D)$ taken over all orientations $D$ of an undirected graph $G$ is the \emph{fractional dichromatic number} $\vec{\chi}_f(G)$ of $G$.

The main result of this paper is the following fractional version of the Erd\H{o}s and Neumann-Lara Conjecture.

\begin{thm}
\label{THM:dcn&fcn}
If\/ $\chi_f(G)\ge t$, then
$$\vec{\chi}_f(G) \ge \frac{t}{4\log(2et^2)}.$$
\end{thm}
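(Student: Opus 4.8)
\section*{Proof strategy}

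The plan is to reduce, by linear programming duality on both sides, to a statement about a \emph{single} orientation and a \emph{single} vertex weighting, and then to build that orientation out of a large clique minor of $G$ furnished by the Reed--Seymour theorem. For the reduction: by the digraph analogue of Lemma~\ref{} (that is, $\chi_f(D)=\omega_f(D)$), for every orientation $D$ of $G$ and every probability distribution $\nu$ on $V(G)$ one has
$$\vec{\chi}_f(G)\ \ge\ \chi_f(D)\ =\ \omega_f(D)\ \ge\ \frac{1}{\max\{\,\nu(S)\ :\ S\subseteq V(D)\ \text{acyclic}\,\}}.$$
Hence it suffices to produce one orientation $D$ of $G$ and one probability distribution $\nu$ such that every acyclic set $S$ of $D$ satisfies $\nu(S)\le x$, where $x:=\tfrac{4\log_2(2et^2)}{t}$; then $\vec{\chi}_f(G)\ge 1/x$, as claimed. (For orientation, I note that taking $\nu$ to be the normalized fractional clique of $G$ — which exists by Lemma~\ref{} since $\omega_f(G)=\chi_f(G)\ge t$, and satisfies $\nu(I)\le 1/t$ for all independent $I$ — reduces matters to making every acyclic set of small fractional-clique measure; but the construction below does not require this.)

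Since $\chi_f(G)\ge t$, the Reed--Seymour theorem (every $K_m$-minor-free graph has $\chi_f\le 2m-2$) yields a $K_m$ minor in $G$ with $m\ge t/2$. Fix a minor model: pairwise disjoint connected branch sets $B_1,\dots,B_m$, taken inclusion-minimal (hence trees whose leaves are among the contact vertices), together with a contact edge $e_{ij}$ joining $c_{ij}\in B_i$ to $c_{ji}\in B_j$ for each $i\ne j$. Draw a uniformly random tournament $\tau$ on $[m]$ and orient $G$ so that: $e_{ij}$ points from $B_i$ to $B_j$ whenever $i\to j$ in $\tau$; inside each $B_i$, with a hub $\rho_i$, the tree is oriented (as a function of $\tau$) so that $\rho_i$ reaches every out-contact $c_{ij}$ with $i\to j$ and is reached by every in-contact $c_{ki}$ with $k\to i$; all remaining edges of $G$ are oriented arbitrarily. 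Then every directed triangle $i\to j\to k\to i$ of $\tau$ is witnessed by a directed cycle of $D$ lying inside $B_i\cup B_j\cup B_k$ (route through the hubs). Now a uniformly random tournament on $[m]$ has, with positive probability, no transitive subtournament on more than $2\log_2 m+1$ vertices (union bound: a fixed $s$-set is transitive with probability $s!/2^{\binom s2}\le m^{-s}$ once $s>2\log_2 m+1$); fix such a $\tau$. Taking $\nu$ uniform on the $m(m-1)$ contact vertices, one then argues that an acyclic set can meet only $O(\log m)$ of the branch sets ``essentially'' — otherwise three of those indices form a directed triangle whose routing cycle is contained in the set — so that $\nu(S)=O(\log m)/m=O(\log t)/t$; tracking constants, $\nu(S)\le x$, and the displayed inequality gives $\vec{\chi}_f(G)\ge t/(4\log_2(2et^2))$.

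The main obstacle is the assertion in the previous paragraph that an acyclic set meets few branch sets \emph{in a way that forces a routing cycle}: a priori an acyclic $S$ could intersect $B_i$ in a large but ``unhelpful'' subset missing exactly the hub or contact vertices needed for routing. Making this loss-free requires a coordinated choice of (i) the minor model (minimal, spider-like branch sets, so that the in/out partition of the contacts induced by $\tau$ is always realizable by a single hub orientation), (ii) the within-branch orientation (hub and tree orientation chosen as functions of $\tau$), and (iii) the support of $\nu$ (the routing-relevant vertices rather than all of $B_i$), so that ``$S$ meets $B_i$ with positive $\nu$-mass'' already forces $S$ to contain a full routing path through $B_i$. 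I expect this bookkeeping, together with the elementary random-tournament union bound performed over the $m$-vertex index set and the $\Theta(m^2)=\Theta(t^2)$-vertex contact set — which is where the $t^2$ inside the logarithm comes from — to be exactly where the constant $\tfrac14$ and the shift $2e$ get pinned down. The decisive point of going through Reed--Seymour, rather than orienting $G$ at random directly, is that the whole argument is then controlled by $t$ instead of by $|V(G)|$; a direct first-moment argument on a random orientation of $G$ cannot work in general, because $|V(G)|$ is unbounded in terms of $t$ and a heavy acyclic set can consist of a small dense core padded by arbitrarily many nearly independent vertices, which defeats any naive union bound over subsets of $V(G)$.
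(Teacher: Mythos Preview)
Your reduction via LP duality is fine, and the instinct to control everything by $t$ rather than $|V(G)|$ is exactly right. But the Reed--Seymour route has a genuine gap at the step you yourself flag, and I do not see how to close it.

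First, the tree-orientation claim in (ii) is false in general. Take a branch set that is a path $v_1v_2v_3v_4$ with all four vertices serving as contacts, and suppose $\tau$ makes $v_1,v_3$ in-contacts and $v_2,v_4$ out-contacts. A short case check over $\rho\in\{v_1,v_2,v_3,v_4\}$ shows that no orientation of the path lets $\rho$ reach both out-contacts while being reached from both in-contacts; the alternating in/out pattern forces a conflict on some edge. Minimal branch sets need not be spiders, so you cannot assume this obstruction away, and since the in/out bipartition of the contacts in each $B_i$ is dictated by $\tau$, you would need all $m$ branch sets to be simultaneously orientable for the chosen $\tau$.

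Second, and more fundamentally, even granting a perfect hub orientation, an acyclic $S$ can pick up one contact vertex from each of many branch sets while avoiding every hub and every internal path vertex; then $\nu(S)$ (with $\nu$ uniform on contacts) is large, yet $S$ contains none of your routing cycles. Shifting $\nu$ to the hubs does not help either: $S$ could contain many hubs but none of the paths joining them to the relevant contacts. The ``$S$ has $\nu$-mass $\Rightarrow$ $S$ contains a routed cycle'' implication simply does not hold for minors, because the cycle you build lives in the model, not in $S$.

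Finally, your closing diagnosis is the opposite of what actually works. The paper \emph{does} orient $G$ uniformly at random and run a first-moment argument. The device that tames the union bound is not a clique minor but a combinatorial notion internal to the weight ordering: a set $X\subseteq V$ is called \emph{$t$-principal} if $X\subseteq V_{t|X|}$ (the $t|X|$ heaviest vertices). There are at most $\binom{\lfloor tk\rfloor}{k}<(et)^k$ such sets of size $k$, a count depending only on $t$, and a random orientation makes every $t$-principal set of average degree at least $d=2\log(et^2)$ cyclic with positive probability. Your worry about a heavy acyclic set being ``a small dense core padded by many near-independent vertices'' is handled by a separate deterministic lemma: any set of weight exceeding $2d+4$ must contain a $t$-principal subset of average degree $\ge d$ (otherwise a degeneracy argument extracts an independent set of weight $>1$). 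So the random orientation need only kill the principal dense cores, and the union bound over those is controlled by $t$.
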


The logarithms used in Theorem \ref{THM:dcn&fcn} and throughout the rest of the paper are always taken with respect to base 2.
Note that, when compared with Conjecture \ref{conj:ENL}, this result uses a stronger assumption that the fractional chromatic number is large, but it also gives a stronger conclusion.

The proof of Theorem \ref{THM:dcn&fcn} occupies the whole Section \ref{sect:proofs}.

Erd\H{o}s and Neumann-Lara proved \cite{ENL} that $c_1 \frac{n}{\log n}\le\vec\chi(K_n)\le c_2\,\frac{n}{\log n}$ for some constants $0 < c_1 < c_2$. For large $n$, $c_1\sim \tfrac{1}{2}$ and $c_2\sim 1$.
This implies that the bound of Theorem \ref{THM:dcn&fcn} is best possible up to the multiplicative factor~$4$.

Theorem \ref{THM:dcn&fcn} implies validity of Conjecture \ref{conj:ENL} for graphs whose chromatic number is bounded in terms of their fractional chromatic number. It is therefore natural to ask if some graphs with bounded fractional chromatic number might provide counterexamples to the conjecture of Erd\H{o}s and Neumann-Lara. In Section \ref{sect:Kneser} we treat such examples, in particular Kneser graphs with bounded fractional chromatic number. Let us observe that every graph with fractional chromatic number $q$ has a homomorphism to a Kneser graph whose fractional chromatic number is also $q$. In view of our main Theorem \ref{THM:dcn&fcn}, Kneser graphs with bounded fractional chromatic number are really the milestones, for which the Erd\H{o}s and Neumann-Lara conjecture should be tested first. We show that Kneser graphs have large dichromatic number as long as their chromatic number is large enough, see Theorem \ref{thm:DX Kneser}. The lower bound given by the theorem is surprisingly large and thus gives a firm support towards Conjecture \ref{conj:ENL}.

\subsection*{Previous work}

Dichromatic number was formally introduced by Neumann-Lara in 1982 \cite{NL82}, but its first appearance can be found in a paper by Erd\H{o}s \cite{ENL} in 1979, where he discusses several results and conjectures proposed by himself and Neumann-Lara. The notion of digraph coloring (for the chromatic and circular chromatic number) by using acyclic sets as color classes was rediscovered by Mohar \cite{Mo03} and studied in a follow-up paper by Bokal et al. \cite{BFJKM2004}. A few years later, it became clear that this notion of the chromatic number, when restricted to tournaments, is tightly related to the Erd\H{o}s-Hajnal conjecture (\cite{Ch14}), see, e.g.\ \cite{BCCFLSSS}.

As pointed out by Paul Erd\H{o}s in \cite{ENL}, ``It is surprisingly difficult to determine $\DX(G)$ even for the simplest graphs.'' It is thus not surprising that no real progress concerning the problems of him and Neumann-Lara in \cite{ENL} has been made until today.

\section{Principal and sparse vertex-sets}
\label{sect:proofs}
\label{sec:proofs lemmas}

This section is devoted to the proof of Theorem \ref{THM:dcn&fcn}. We let $V=V(G)$ and $n=|V|$. Apparently, we can assume that $\chi_f(G)=t$ and that $t>4\log(2et^2)$, which implies that $t>56$. Since $\omega_f(G)=\chi_f(G)$, there exists a weight function $w:V\to\RR^+$, such that $w(V)=t$, and for any $I\in \mathcal{I}(G)$, $w(I)\le 1$.
Here and in the sequel we write $w(A) = \sum_{v\in A} w(v)$ for any vertex-set $A\subseteq V$, and call this value the \emph{weight\/} of $A$.

From now on we fix $w$ and assume that
the vertices of $V(G)$ are listed as $\VEC v1n$ in the non-increasing order of their weights,
i.e. $w(v_{i+1})\le w(v_i)$ for $i=1,\dots,n-1$. For any subset $X$ of $V$, we also rank the elements in $X$ according to the ordering of $V$, and we denote by $X_k$ the subset of the first $k$ elements in $X$. In particular, $V_k=\{\VEC v1k\}$. We extend this notion to all positive real numbers by setting
$X_s := X_{\lfloor s\rfloor}$.

 For $s\in {\mathbb R}^+$, a nonempty subset $X$ of a vertex-set $Y$ is said to be \emph{$s$-principal\/} in $Y$ if $X\subseteq Y_{s|X|}$. That is, if $X$ has size $m$, then all elements of $X$ are within the first $\lfloor sm\rfloor$ vertices in $Y$. On the other hand, a subset $X$ of $Y$ is \emph{$s$-sparse} in $Y$ if $X$ contains no $s$-principal subset in $Y$. When the hosting set $Y$ for $s$-principal or $s$-sparse is not specified, by default it is $V$.

The following property is clear from the definition of sparse sets.

\begin{clm}
Any subset of an $s$-sparse set in $Y$ is also $s$-sparse in $Y$.
\end{clm}

\begin{clm}
\label{clm:sparse characterization}
$X$ is an $s$-sparse subset in $Y$ if and only if $|Y_k\cap X|< k/s$ for $1\le k\le |Y|$. In particular, if $X$ is $s$-sparse in $Y$, then $|X|<|Y|/s$.
\end{clm}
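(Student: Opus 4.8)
The plan is to prove the stated equivalence by establishing each implication directly, translating between the "contains an $s$-principal subset" language and the prefix-counting inequality, and then to read off the ``in particular'' clause by specializing to $k=|Y|$.

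\medskip
\noindent\textbf{From $s$-sparse to the counting inequality.} I would argue by contraposition. Suppose some $k$ with $1\le k\le|Y|$ satisfies $|Y_k\cap X|\ge k/s$, and set $Z:=Y_k\cap X$ and $m:=|Z|$. Because $k\ge1$ and $s>0$, the inequality $m\ge k/s$ forces $m\ge1$, so $Z$ is nonempty; moreover $k\le sm$, and since $k$ is an integer this gives $k\le\lfloor sm\rfloor$. Hence $Z\subseteq Y_k\subseteq Y_{\lfloor sm\rfloor}=Y_{s|Z|}$, which is exactly the statement that $Z$ is $s$-principal in $Y$. As $Z\subseteq X$, this contradicts $s$-sparsity of $X$.

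\medskip
\noindent\textbf{From the counting inequality to $s$-sparse.} Conversely, assume $|Y_k\cap X|<k/s$ for all $k\in\{1,\dots,|Y|\}$, and suppose some nonempty $Z\subseteq X$ is $s$-principal in $Y$; write $m:=|Z|\ge1$. Then $Z\subseteq Y_{\lfloor sm\rfloor}$, and since $Z\ne\varnothing$ and $Y_0=\varnothing$ we must have $\lfloor sm\rfloor\ge1$. Put $k:=\min\{\lfloor sm\rfloor,|Y|\}$, so $1\le k\le|Y|$ and $Z\subseteq Y_k$ in every case. Combined with $Z\subseteq X$, this yields $|Y_k\cap X|\ge m$, while the hypothesis gives $|Y_k\cap X|<k/s\le \lfloor sm\rfloor/s\le m$ (and when $k=|Y|<\lfloor sm\rfloor$ one still has $k/s=|Y|/s<\lfloor sm\rfloor/s\le m$), so $m<m$, a contradiction. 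Finally, setting $k=|Y|$ in the characterization and using $X\subseteq Y$, so that $Y_{|Y|}\cap X=X$, immediately gives $|X|<|Y|/s$.

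\medskip
\noindent I do not expect a genuine obstacle here: the only care needed is in the floor manipulations and the boundary cases — ensuring the intersection $Y_k\cap X$ is nonempty in the first direction (which is where $k\ge1$ is used), and handling the possibilities $\lfloor s|Z|\rfloor=0$ or $\lfloor s|Z|\rfloor>|Y|$ in the second. Everything else is a routine unwinding of the definitions.
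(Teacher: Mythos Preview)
Your proof is correct and follows essentially the same idea as the paper's: in both arguments the key observation is that when $|Y_k\cap X|\ge k/s$, the set $Y_k\cap X$ itself is an $s$-principal subset of $X$ in $Y$, and conversely an $s$-principal $Z\subseteq X$ forces $|Y_k\cap X|\ge |Z|\ge k/s$ for $k=\lfloor s|Z|\rfloor$. The paper phrases this via an intermediate equivalence (namely, $X$ is $s$-sparse iff $|Y_{sr}\cap X|<r$ for all $r\le|X|$) whereas you argue directly by contraposition and are more explicit about the boundary cases $\lfloor sm\rfloor=0$ and $\lfloor sm\rfloor>|Y|$; these are organizational differences only.
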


\begin{proof}
It is clear that $X$ is $s$-sparse if and only if for each $r=1,\dots,|X|$, $|Y_{sr}\cap X| < r$.
If $|Y_k\cap X|< k/s$ for every $k$, then this holds also for $k=\lfloor sr\rfloor$, implying that
$|Y_{sr}\cap X| < \lfloor sr\rfloor / s \le r$. Conversely, if $|Y_{sr}\cap X| < r$, then $|Y_{sr}\cap X| \le r-1$. Let $s(r-1) < k \le sr$. Then $|Y_k\cap X| \le |Y_{sr}\cap X| \le r-1 < k/s$. This proves the converse.
\end{proof}

The next claim about the total weight of an $s$-sparse set will be essential for us.

\begin{clm}\label{CLM:sparseislight}
If $X$ is an $s$-sparse subset of\/ $Y$, then $w(X)\le \frac{1}{s}\,w(Y)$.
\end{clm}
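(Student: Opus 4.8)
The plan is to prove Claim \ref{CLM:sparseislight} by exploiting the characterization from Claim \ref{clm:sparse characterization}, which says that $|Y_k\cap X| < k/s$ for every $k$, together with the fact that the vertices are listed in non-increasing order of weight. The intuition is that an $s$-sparse set is forced to ``spread out'' among the low-weight vertices: it cannot concentrate too many of its elements among the first (heaviest) vertices of $Y$, so its total weight must be small relative to $w(Y)$.

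First I would set up notation: write $Y = \{u_1, u_2, \ldots, u_m\}$ where the $u_j$ are the elements of $Y$ listed in the order inherited from $V$, so $w(u_1) \ge w(u_2) \ge \cdots \ge w(u_m)$ and $Y_k = \{u_1,\ldots,u_k\}$. Let $\mathbf{1}_X(j)$ be the indicator that $u_j \in X$. The key quantity is the ``prefix count'' $c_k := |Y_k \cap X| = \sum_{j=1}^k \mathbf{1}_X(j)$, and Claim \ref{clm:sparse characterization} gives $c_k < k/s$ for all $1 \le k \le m$, equivalently $c_k \le \lceil k/s\rceil - 1$, or more simply $s\,c_k < k$. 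Then $w(X) = \sum_{j=1}^m \mathbf{1}_X(j)\, w(u_j)$, and I want to bound this by $\frac{1}{s}\sum_{j=1}^m w(u_j) = \frac{1}{s} w(Y)$.

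The cleanest route is an Abel summation / rearrangement argument. Writing $w(u_j) = \sum_{k \ge j}(w(u_k) - w(u_{k+1}))$ with the convention $w(u_{m+1}) = 0$, and setting $\delta_k := w(u_k) - w(u_{k+1}) \ge 0$ (nonnegativity is exactly where the weight-ordering is used), we get
\[
w(X) = \sum_{j=1}^m \mathbf{1}_X(j)\sum_{k=j}^m \delta_k = \sum_{k=1}^m \delta_k \sum_{j=1}^k \mathbf{1}_X(j) = \sum_{k=1}^m \delta_k\, c_k,
\]
and similarly $w(Y) = \sum_{k=1}^m \delta_k\, k$. Since $\delta_k \ge 0$ and $c_k \le k/s$ termwise (using $c_k < k/s$, which certainly gives $c_k \le k/s$), we conclude $w(X) = \sum_k \delta_k c_k \le \frac{1}{s}\sum_k \delta_k k = \frac{1}{s} w(Y)$, as desired.

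The only real subtlety — and the step I'd expect to be the main obstacle to a fully careful write-up — is making sure the interchange of summation and the telescoping identity are handled correctly (including the boundary term $w(u_{m+1}) = 0$), and confirming that $\delta_k \ge 0$ holds precisely because $X$ and $Y$ inherit the global non-increasing weight order, so that within $Y$ the weights $w(u_1) \ge \cdots \ge w(u_m)$ are indeed sorted. Everything else is a routine termwise comparison. An alternative, if one prefers to avoid Abel summation, is a greedy ``worst case'' argument: among all sets $X$ with a given sparsity profile, the total weight is maximized by taking $X$ to be as far ``to the left'' as the constraints $c_k < k/s$ permit, i.e. essentially $X = \{u_j : j \equiv 0 \pmod{\lceil s\rceil}\}$-type patterns, and then a direct comparison shows $w(X) \le \frac1s w(Y)$; but the Abel summation version is shorter and avoids case analysis, so that is the approach I would present.
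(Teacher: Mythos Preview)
Your proof is correct. It takes a genuinely different route from the paper's argument, though both rest on the same underlying facts (the weight ordering and the prefix-count bound from Claim~\ref{clm:sparse characterization}).

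The paper argues element by element: listing $X=\{x_1,\dots,x_m\}$ in the inherited order, sparseness forces $x_i\notin Y_{si}$, so $w(x_i)\le w(y_j)$ for every $j\le\lceil si\rceil$. Summing, one gets $s\,w(X)\le \int_0^{sm} w(y_{\lceil z\rceil})\,dz \le w(Y)$, an integral-comparison bound that pairs each $x_i$ against a block of $s$ consecutive $y_j$'s. Your Abel summation instead rewrites both $w(X)$ and $w(Y)$ as $\sum_k \delta_k c_k$ and $\sum_k \delta_k k$, then applies the termwise inequality $c_k\le k/s$. Your approach is arguably cleaner and more mechanical once one knows the summation-by-parts trick, and it uses only the prefix-count characterization rather than the position of individual elements; the paper's version is more hands-on and makes the ``each $x_i$ is pushed past $si$ heavier elements'' intuition explicit. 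Either way the content is the same inequality read through two standard lenses.
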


\begin{proof}
Let $\VEC y1r$ be the non-decreasing order of the elements of $Y$ with $r=|Y|$, and let $\VEC x1m$ be the ordering of $X$ with $m=|X|$. Since $X$ is an $s$-sparse subset of $Y$, we have $x_i\not\in Y_{si}$. Hence for $1\le i\le m$, $w(x_i)\le w(y_j)$ if $1\le j\le si$. Moreover, since $x_i\in Y\setminus Y_{si}$, we also have $w(x_i)\le w(y_j)$ for $j=\lceil si\rceil$.

For a real parameter $z\in(0,|Y|]$, define $f(z)=y_{\lceil z\rceil}$. Then $f(z)\ge w(x_1)$ for $0<z\le s$, $f(z)\ge w(x_2)$ for $s<z\le 2s$, \dots, $f(z)\ge w(x_m)$ for $(m-1)s < z \le ms$.
Therefore,
$$
    s\,w(X) = s\sum_{i=1}^m w(x_i) \le \int_0^{sm} f(z)dz \le \sum_{j=1}^{\lceil sm\rceil} w(y_j) \le w(Y),
$$
which gives what we were aiming to prove.
\end{proof}

The \emph{average degree} of a graph $G$, denoted by $\bar{d}(G)$, equals $\tfrac{1}{n}\sum_{v\in V(G)}d(v) = \tfrac{2e(G)}{n}$, where $e(G)$ denotes the number of edges in $G$. When $A$ is a vertex-set of $G$, the \emph{average degree} of $A$, denoted by $\bar{d}(A)$, means the average degree $\bar{d}(G[A])$ of the corresponding induced subgraph of $G$.

The following lemmas will be instrumental in proving Theorem \ref{THM:dcn&fcn}.

\begin{lem}\label{LEM:PDSC}
Suppose that $\chi_f(G)\ge t$. Let $d = 2\log(et^2)$ and suppose that $t\ge 2(d+1)$.
Then there exists an orientation of the edges of $G$ such that every $t$-principal set of $G$ with average degree at least $d$ contains a directed cycle.
\end{lem}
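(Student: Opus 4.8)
The plan is to take a uniformly random orientation of $G$ — each edge oriented one way or the other with probability $\tfrac12$, independently of the others — and to run the probabilistic method: I will show that the expected number of $t$-principal sets of average degree at least $d$ that induce no directed cycle is less than $1$.

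First, fix such a set $X$, write $m:=|X|$, and note that $e(G[X])=\tfrac12\bar d(G[X])\,m$ while the orientations of the edges of $G[X]$ are independent and uniform; hence the probability that $X$ induces no directed cycle is exactly $a(G[X])/2^{e(G[X])}$, where $a(H)$ denotes the number of acyclic orientations of a graph $H$. The key point — and the step I expect to be the real obstacle — is that the naive estimate $a(G[X])\le m!$ is far too wasteful: summed against the number of $t$-principal sets it diverges once $m$ is large, so it must be replaced by a bound in terms of the average degree of $G[X]$. For this I would use that $a(H)$ equals $(-1)^{|V(H)|}$ times the chromatic polynomial of $H$ evaluated at $-1$, which equals the Tutte polynomial value $T_H(2,0)$; that $T_H(2,0)\le T_H(2,1)$ because the Tutte polynomial has non-negative coefficients; and that $T_H(2,1)$ is the number of forests (acyclic edge subsets) of $H$. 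Encoding each forest, with its trees rooted arbitrarily, by the map that sends every vertex to its parent or to a symbol ``root'' — for which vertex $v$ has at most $\deg_H(v)+1$ choices — shows that the number of forests of $H$ is at most $\prod_{v}(\deg_H(v)+1)$, which by the AM--GM inequality is at most $(\bar d(H)+1)^{|V(H)|}$. Applied to $G[X]$ this gives
\[
   \Pr\bigl[\,X\text{ induces no directed cycle}\,\bigr]\ \le\ \left(\frac{\bar d(G[X])+1}{2^{\bar d(G[X])/2}}\right)^{\!m}\ \le\ \left(\frac{d+1}{2^{d/2}}\right)^{\!m}\ =\ \left(\frac{d+1}{et^{2}}\right)^{\!m},
\]
the middle inequality holding because $x\mapsto(x+1)2^{-x/2}$ is non-increasing once $x$ exceeds a small absolute constant (which $d$ certainly does, given $t\ge 2(d+1)$), and the last equality because $2^{d/2}=2^{\log(et^{2})}=et^{2}$.

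It remains to union bound. A graph of average degree at least $d$ has at least $d+1$ vertices, and a $t$-principal set of size $m$ is contained in $V_{\lfloor tm\rfloor}$, so there are at most $\binom{\lfloor tm\rfloor}{m}\le(et)^{m}$ of them. Hence the expected number of $t$-principal sets of average degree $\ge d$ that induce no directed cycle is at most
\[
   \sum_{m\ge d+1}(et)^{m}\left(\frac{d+1}{et^{2}}\right)^{\!m}\ =\ \sum_{m\ge d+1}\left(\frac{d+1}{t}\right)^{\!m}\ \le\ \sum_{m\ge d+1}2^{-m}\ <\ 1,
\]
where the last inequalities use $t\ge 2(d+1)$. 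Thus some orientation avoids all such ``bad'' sets, which is exactly the assertion of the lemma. Everything here except the bound $a(H)\le(\bar d(H)+1)^{|V(H)|}$ is routine bookkeeping; that bound — equivalently, controlling the number of acyclic orientations by the number of forests and then by a product over the degrees — is the crux, and with only the trivial estimate $a(H)\le |V(H)|!$ the sum above does not converge for large principal sets.
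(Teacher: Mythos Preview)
Your proof is correct and follows the same overall strategy as the paper: take a uniformly random orientation, bound the probability that a fixed $t$-principal set $X$ of size $m$ is acyclic by $(\bar d(G[X])+1)^{m}/2^{e(G[X])}\le((d+1)/et^{2})^{m}$, count $t$-principal $m$-sets by $\binom{\lfloor tm\rfloor}{m}\le(et)^{m}$, and sum the resulting geometric series using $t\ge 2(d+1)$.

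The only real difference is in how you reach the key estimate $a(H)\le\prod_{v}(\deg_H(v)+1)$. The paper proves this in one line by observing that two distinct acyclic orientations cannot have the same out-degree sequence (peel off a sink and induct), so the number of acyclic orientations is at most the number of possible out-degree sequences. Your route via $a(H)=T_H(2,0)\le T_H(2,1)=\#\{\text{forests}\}$ together with the parent-map encoding is perfectly valid and lands on the identical bound, but it invokes more machinery (Stanley's theorem, non-negativity of Tutte coefficients) than is needed; the out-degree-sequence argument is both shorter and more elementary. One small wording point: ``rooted arbitrarily'' should be ``rooted canonically'' (e.g.\ at the least-indexed vertex of each component) so that the encoding is a genuine injection from forests into $\prod_v(N(v)\cup\{\ast\})$.
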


\begin{lem}\label{LEM:HWPDS}
Let $d = 2\log(et^2)$ and suppose that $t\ge 2(d+1)$.
If\/ $A$ is a vertex-set with weight more than $2d+4$ such that each independent subset of\/ $A$ has weight at most\/ $1$, then $A$ contains a $t$-principal subset whose average degree is at least $d$.
\end{lem}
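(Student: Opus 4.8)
The plan is to extract the $t$-principal set greedily from the weight-heavy set $A$, and then argue that it cannot be too sparse in edges, for otherwise $A$ would contain a heavy independent set, contradicting the hypothesis. Concretely, set $m=|A|$ and rank the vertices of $A$ as $a_1,\dots,a_m$ according to the global ordering of $V$. For each $j\le m$, the prefix $A_j=\{a_1,\dots,a_j\}$ is automatically $t$-principal in $A$ exactly when $A_j\subseteq V_{tj}$, i.e. when $a_j$ lies among the first $\lfloor tj\rfloor$ vertices of $V$; but since $A_j$ consists of the $j$ heaviest vertices of $A$ and $A\subseteq V$, the prefix $A_j$ is always $t$-principal provided $t\ge 1$ — so every prefix is a candidate, and the real content is to locate one whose induced subgraph has average degree at least $d$. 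So I would instead look at prefixes $A_j$ and ask: if \emph{none} of them had average degree $\ge d$, what could we say about $A$?

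The key step is a counting/peeling argument. Suppose for contradiction that every $t$-principal subset of $A$ has average degree less than $d$; in particular every prefix $A_j$ satisfies $e(G[A_j])<\tfrac{d}{2}\,j$. Using the standard fact that a graph in which every subgraph has average degree $<d$ is $\lceil d\rceil$-degenerate — more precisely, one can order the vertices so that each has back-degree $<d$ — I would order $A$ so that, processing $a_m,a_{m-1},\dots$, each vertex sends fewer than $d$ edges to the vertices preceding it; the point is that this degeneracy ordering can be taken \emph{compatible with the weight ordering}, because the relevant bound $e(G[A_j])<\tfrac d2 j$ holds for every prefix in the weight order, so each $a_j$ has fewer than $d$ neighbours among $a_1,\dots,a_{j-1}$ — wait, that inference needs the bound on \emph{all} subsets, not just prefixes, so instead I would iteratively remove a \emph{minimum-degree} vertex: in any subset $B\subseteq A$ the average degree is $<d$ (as $B$ is $t$-principal in $A$ when $t$ is large — this is where $t\ge 2(d+1)$ enters, guaranteeing $B\subseteq A_{t|B|}$ whenever $|A|<t|B|\cdot$something, which holds since $A$ is weight-heavy hence has few vertices relative to its weight), so $B$ has a vertex of degree $<d$, exhibiting $\lceil d\rceil$-degeneracy of $G[A]$.

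Once $G[A]$ is $\lceil d\rceil$-degenerate, it has independence number (even fractional/weighted independence number) at least $w(A)/(d+1)$: greedily, repeatedly pick the lowest-degree remaining vertex into $I$ and delete its at-most-$d$ neighbours plus itself, losing at most $(d+1)$ times the maximum current weight but gaining a vertex of that weight into $I$; summing the geometric-type bound, or more cleanly using the weighted Caro–Wei / Turán bound $w(I)\ge \sum_{v\in A} w(v)/(d_A(v)+1)$, gives a weighted independent set $I\subseteq A$ with $w(I)\ge w(A)/(d+1)$. Since $w(A)>2d+4=2(d+1)+2>d+1$, this forces $w(I)>1$, contradicting the hypothesis that every independent subset of $A$ has weight at most $1$. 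The main obstacle — and the step I would spend the most care on — is verifying that the weight-heaviness hypothesis $w(A)>2d+4$ together with $t\ge 2(d+1)$ really does guarantee that \emph{every} subset $B$ of $A$ (not merely the prefixes) that we peel off along the way is $t$-principal in $A$; this is where one must bound $|A|$ in terms of $w(A)$ and $t$. Here I would use that the heaviest vertex has weight at most $1$ (it lies in a singleton independent set), so $|A|\ge w(A)$, but more importantly one must show $|A|< t\cdot|B|$ whenever $|B|$ is the size we care about — this is delicate and may require choosing $B$ to be not arbitrary but the prefix of the degeneracy order, intersecting the two orderings carefully, which I expect to be the technical heart of the argument and the reason the precise constant $2d+4$ appears.
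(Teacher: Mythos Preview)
Your proposal has a genuine gap rooted in a confusion about what ``$t$-principal'' means here. By the paper's convention, a $t$-principal subset (with no hosting set specified) is $t$-principal \emph{in $V$}: a set $B$ of size $b$ must satisfy $B\subseteq V_{tb}$, i.e.\ every element of $B$ lies among the $\lfloor tb\rfloor$ heaviest vertices of the whole graph. You repeatedly slide between this and ``$t$-principal in $A$'' (the condition $B\subseteq A_{tb}$). Your opening claim that every prefix $A_j$ is $t$-principal once $t\ge 1$ is true only for the ``in $A$'' reading and is simply false in $V$: the $j$ heaviest vertices of $A$ need not be among the $tj$ heaviest of $V$. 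Likewise, your peeling argument needs that \emph{every} nonempty $B\subseteq A$ has average degree $<d$; the contradiction hypothesis gives this only for $B$ that are $t$-principal in $V$, and there is no way to force that for arbitrary (or even for minimum-degree-removal) subsets --- a subset of $A$ can consist entirely of very light vertices deep in the global order. Your attempted fix ``$|A|<t|B|\cdot\text{something}$'' is the condition for $t$-principal in $A$, not in $V$, so it does not connect to the hypothesis; and no bound on $|A|$ alone will make small subsets of $A$ $t$-principal in $V$.

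What actually works is not to make all of $G[A]$ degenerate, but to isolate the obstruction. Define the back-degree $d^{>}(v)$ of $v\in A$ as the number of its neighbours in $A$ preceding it in the global order, and set $L=\{v\in A:d^{>}(v)\ge d\}$. For the $j$th element $v_{i_j}$ of $L$, the prefix $V_{i_j}\cap A$ carries at least $dj$ edges; if it had at most $2j$ vertices it would have average degree $\ge d$, so either it is not $t$-principal in $V$ (meaning $|V_{i_j}|>t\,|V_{i_j}\cap A|$) or it has more than $2j$ vertices. These two alternatives make $L$ split as $L_1\cup L_2$ with $L_1$ $2$-sparse in $A$ and $L_2$ $t$-sparse in $V$, whence $w(L)\le \tfrac12 w(A)+1$ by Claim~\ref{CLM:sparseislight}. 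Now $S=A\setminus L$ is genuinely $\lfloor d\rfloor$-degenerate (in the global order each vertex has back-degree $<d$), has weight at least $\tfrac12 w(A)-1>d+1$, and a colour class of an $\lfloor d+1\rfloor$-colouring of $G[S]$ gives the heavy independent set. The constant $2d+4$ comes from $\tfrac12 w(A)-1>d+1$, not from any $|A|$-versus-$t|B|$ inequality.
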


Our main result, Theorem \ref{THM:dcn&fcn}, easily follows from Lemmas \ref{LEM:PDSC} and \ref{LEM:HWPDS}.

\begin{proof}[Proof of Theorem~\ref{THM:dcn&fcn}]
Let $d$ be as in Lemmas \ref{LEM:PDSC} and \ref{LEM:HWPDS}.
We may assume that $t > 4\log(2et^2) = 2d+4$.
By our assumption on the weight function of $G$ being the one that gives the fractional independence number of $G$, there is no independent set with weight more than $1$. Hence by Lemma~\ref{LEM:HWPDS}, every vertex-set with weight more than $2d+4$ contains a $t$-principal set with average degree at least $d$. By Lemma~\ref{LEM:PDSC}, there exists an orientation $D$, such that every $t$-principal set with average degree at least $d$ is cyclic. Hence every vertex-set with weight more than $2d+4$ is cyclic. Since the total weight for $V(G)$ is $t$, and every acyclic set in $D$ has weight at most $2d+4$, we must have $\chi_f(D)\ge\frac{t}{2d+4}$.
Therefore $\vec{\chi}_f(G) \ge \frac{t}{2d+4}$, which is the same as the bound in the theorem.
\end{proof}

First we give a proof of Lemma~\ref{LEM:HWPDS}.

\begin{proof}[Proof of Lemma~\ref{LEM:HWPDS}]
Suppose $A$ is a vertex-set whose weight is more than $2d+4$ and suppose that $A$ contains no $t$-principal subsets with average degree at least $d$. For a vertex $v \in A$, let $d^>(v)$ be the number of neighbors of $v$ in $G[A]$ that appear before $v$ in the ordering $\VEC v1n$. We have
$e(A_i)=\sum_{v\in A_i} d^>(v)$ for each $i=1,\dots,|A|$. Let $L=\{v\in A: d^>(v)\ge d\}$, and assume $L=\{v_{i_1},\dots,v_{i_l}\}$, where $l=|L|$. Then $e(V_{i_j}\cap A)\ge dj $ for $1\le j\le l$. In particular, if $|V_{i_j}\cap A|\le 2j$, then the average degree $\bar d(V_{i_j}\cap A)$ will be at least $d$. Since $A$ does not contain any $t$-principal set with average degree at least $d$, the set $V_{i_j}\cap A$ would not be $t$-principal in this case. Thus, we have one of the following:

(1) $|V_{i_j}\cap A|> 2j$, or

(2) $|V_{i_j}|> t|V_{i_j}\cap A|$.

\smallskip
\noindent
Let $L_1=\{v_{i_j}\in L:|V_{i_j}\cap A|> 2j\}$, and let $L_2=\{v_{i_j}\in L: |V_{i_j}|\ge t|V_{i_j}\cap A|\}$. Then $L=L_1\cup L_2$.
If $v_{i_j}\in L_1$, then it is not among the first $2j$ elements of $A$. As the $j$-th element in $L_1$ does not appear before $v_{i_j}$, the $j$th element of $L_1$ is not among the first $2j$ elements of $A$. Therefore $L_1$ is a 2-sparse subset of $A$. By Claim~\ref{CLM:sparseislight}, $w(L_1)\le \frac{1}{2}w(A)$.

For each $v_{i_j}\in L_2$, we have $|V_{i_j}\cap A|<\frac{1}{t}|V_{i_j}|$. As $|V_{i_j}\cap L_2|\le |V_{i_j}\cap A|<\frac{1}{t}|V_{i_j}|$,
we see by Claim \ref{clm:sparse characterization} that $L_2$ is a $t$-sparse subset of $V$. By Claim~\ref{CLM:sparseislight}, $w(L_2)\le \frac{1}{t}w(V)=1$.

Let $S=A\setminus L = \{v\in A: d^>(v) < d\}$. Then we have $w(S)\ge w(A)-w(L_1)-w(L_2)\ge \frac{w(A)}{2}-1$. Also, $G[S]$ is a $\lfloor d \rfloor$-degenerate graph, hence $S$ is $\lfloor d+1\rfloor$-colorable. Therefore, there is at least one independent set with weight at least
$$\frac{w(S)}{\lfloor d+1\rfloor}\ge \frac{w(A)/2-1}{d+1} > 1.$$
In the last inequality we used the fact that $w(A)>2d+4$. This completes the proof.
\end{proof}

In the rest of this section, we are going to use probabilistic method to prove Lemma~\ref{LEM:PDSC}, that there exists an orientation such that no $t$-principal set of $G$ with average degree at least $d$ is acyclic.
We will need the following lemma about acyclic orientations.

\begin{lem}
\label{lem:number acyc ori degrees}
The number of acyclic orientations of $G$ is at most
$$\prod_{v\in V(G)} (d_G(v)+1).$$
\end{lem}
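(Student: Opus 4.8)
The plan is to set up a bijective/injective encoding of each acyclic orientation of $G$ by a vector whose $v$-th coordinate ranges over a set of size $d_G(v)+1$. The natural device is the following: every acyclic orientation $D$ of $G$ admits a topological ordering of its vertices (a linear order $\prec$ such that every arc goes from a $\prec$-smaller vertex to a $\prec$-larger one). Conversely, a linear order on $V(G)$ determines a unique acyclic orientation. So acyclic orientations correspond to linear orders, modulo the fact that several linear orders can induce the same orientation. To get the bound, I would process the vertices in the fixed reference ordering $\VEC v1n$ and, for each $v_i$, record where $v_i$ sits relative to its already-processed neighbours.

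Concretely, here is the encoding I would use. Fix an acyclic orientation $D$ and let $\prec$ be any topological order of $D$. Build the code $\varphi(D) = (c_1,\ldots,c_n)$ by induction on $i$: having placed $v_1,\ldots,v_{i-1}$ along a line in their $\prec$-order, insert $v_i$ into this line in its correct $\prec$-position; let $N_i$ be the set of neighbours of $v_i$ among $\{v_1,\dots,v_{i-1}\}$, which already occupy $|N_i|$ of the slots, and let $c_i \in \{0,1,\dots,|N_i|\}$ be the number of elements of $N_i$ that precede $v_i$. Since $|N_i| \le d_G(v_i)$, the coordinate $c_i$ takes at most $d_G(v_i)+1$ values. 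The key point is that $\varphi$ is injective on acyclic orientations: the tuple $(c_1,\ldots,c_n)$ determines, for every edge $v_iv_j$ with $i<j$, whether $v_j$ comes before or after $v_i$ in $\prec$ — namely, $c_j$ records exactly how many of $v_j$'s back-neighbours (a set determined by the orientations of edges among $v_1,\dots,v_{j-1}$, hence inductively known) lie before it, and by processing the back-neighbours of $v_j$ in the order they were inserted one recovers the orientation of each edge $v_iv_j$ incident to $v_j$. Thus the orientation of $G$ is reconstructed edge by edge, so distinct acyclic orientations get distinct codes, and the number of acyclic orientations is at most $\prod_{v\in V(G)}(d_G(v)+1)$.

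The one subtlety to be careful about — and the step I would spend the most care on — is the reconstruction argument: a single acyclic orientation has many topological orders, so $\varphi$ is really a map from (orientation, chosen topological order) pairs, and I must check that the value $\varphi(D)$ does not depend on which topological order is used \emph{for the purpose of the counting bound}, or, more simply, that from any code in the image one can recover \emph{an} orientation and that two different orientations never share a code. The cleanest way to finesse this is to not worry about well-definedness at all: just exhibit, for each acyclic orientation $D$, \emph{some} code $\varphi(D)$ as above, and prove the decoding map $c \mapsto D$ is well-defined on all of $\prod_v\{0,\dots,d_G(v)\}$ and inverts the construction. Then injectivity of $D \mapsto \varphi(D)$ is immediate and the cardinality bound follows. (An alternative, essentially equivalent, route is to induct on $|V(G)|$: pick any sink $v$ of $D$ — which exists because $D$ is acyclic — note its in-neighbours among the rest are arbitrary so there are at most $d_G(v)+1$ choices once we know $D-v$... but this over-counts since $D-v$ together with the edges at $v$ need not have $v$ as a sink; the topological-order encoding above avoids this pitfall and is the approach I would commit to.)
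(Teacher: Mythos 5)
There is a genuine gap: the encoding you propose is not injective, and in fact cannot be, because your code space is too small. Your coordinate $c_i$ records only the \emph{number} of back-neighbours of $v_i$ (neighbours among $v_1,\dots,v_{i-1}$ in the fixed reference order) that precede $v_i$ in the topological order $\prec$, so your argument would actually prove the stronger bound $\prod_i(|N_i|+1)$, where $|N_i|$ is the back-degree. That stronger bound is false. Concretely, take $V=\{v_1,v_2,v_3\}$ with edges $v_1v_3$ and $v_2v_3$: then $N_1=N_2=\emptyset$, $N_3=\{v_1,v_2\}$, so there are only $3$ codes, but this tree has $4$ acyclic orientations; the orientations $v_1\to v_3\to v_2$ and $v_2\to v_3\to v_1$ both produce the code $(0,0,1)$. (A star $K_{1,n-1}$ with its centre listed last makes the failure dramatic: $n$ codes versus $2^{n-1}$ acyclic orientations.) The decoding step fails exactly where you flagged the subtlety: knowing that $c_j$ of the vertices of $N_j$ precede $v_j$ in $\prec$ does not tell you \emph{which} ones do, because the relative $\prec$-order of two non-adjacent back-neighbours is not determined by the previously decoded edges (a DAG has many topological orders), so the orientations of the edges from $v_j$ to $N_j$ cannot be recovered.

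The fix is to make the per-vertex count global rather than relative to the reference ordering: assign to each acyclic orientation $D$ its out-degree sequence $\bigl(d_D^+(v_1),\dots,d_D^+(v_n)\bigr)$, which lives in a set of size $\prod_v (d_G(v)+1)$ since $d_D^+(v)\le d_G(v)$. This map \emph{is} injective on acyclic orientations: an acyclic orientation has a sink, i.e.\ a vertex with $d_D^+(v)=0$, all of whose incident edges must point into it; delete it, decrement the out-degree counts of its neighbours accordingly, and recurse. This is essentially the ``alternative route'' you mention parenthetically and then reject --- the over-counting worry you raise there does not arise, because one is not counting choices of in-neighbours of the sink but merely reconstructing a unique orientation from a given out-degree sequence. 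This is the paper's proof, and I would commit to it rather than to the insertion encoding.
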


\begin{proof}
Let $v_1, \dots, v_n$ be the vertices of $G$. For any orientation $D$ of $G$, we consider its out-degree sequence $(d_D^+(v_1), d_D^+(v_2), \dots, d_D^+(v_n))$. Since $d_D^+(v)\le d_G(v)$, there are at most $\prod_{v\in V(G)} (d_G(v)+1)$ different out-degree sequences. On the other hand, for any out-degree sequence corresponding to some acyclic orientation, we can determine the orientation inductively, starting from that there is a vertex with out-degree $0$, and all the edges incident to it are oriented towards it. So given any two acyclic orientations, it is easy to see that their out-degree sequence cannot be the same. Therefore there are at most $\prod_{v\in V(G)} (d_G(v)+1)$ acyclic orientations.
\end{proof}

We pick an orientation of the graph $G$ among all the $2^{e(G)}$ orientations uniformly at random. Then for each edge, the probability for each direction we pick is $1/2$, and it is mutually independent from the orientation of other edges. We have the following corollary of Lemma \ref{lem:number acyc ori degrees}.

\begin{cor}\label{COR:acyclic}
The probability that a random orientation of $G$ is acyclic is less than $2^{-e(G)(1-2\alpha)}$,
where $\alpha = \log(\bar d(G) + 1) / \bar d(G)$.
\end{cor}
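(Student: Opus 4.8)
The plan is to combine Lemma \ref{lem:number acyc ori degrees} with a convexity argument. Since the random orientation is uniform over all $2^{e(G)}$ orientations, the probability that it is acyclic equals (number of acyclic orientations)$/2^{e(G)}$, which by Lemma \ref{lem:number acyc ori degrees} is at most $2^{-e(G)}\prod_{v\in V(G)}(d_G(v)+1)$. It therefore suffices to show that $\prod_{v}(d_G(v)+1) \le 2^{2\alpha\, e(G)}$, i.e. that $\sum_{v} \log(d_G(v)+1) \le 2\alpha\, e(G) = \alpha \sum_{v} d_G(v)$.

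The key step is to bound $\sum_v \log(d_G(v)+1)$ in terms of $\sum_v d_G(v)$. I would use the concavity of the map $x \mapsto \log(x+1)$ (for $x\ge 0$) together with Jensen's inequality applied to the uniform average over the $n$ vertices: $\frac{1}{n}\sum_v \log(d_G(v)+1) \le \log\!\bigl(\frac{1}{n}\sum_v d_G(v) + 1\bigr) = \log(\bar d(G)+1)$. Multiplying by $n$ gives $\sum_v \log(d_G(v)+1) \le n\log(\bar d(G)+1)$. Now substitute $\alpha = \log(\bar d(G)+1)/\bar d(G)$ and $e(G) = \tfrac12 n\bar d(G)$: the right-hand side is exactly $n\bar d(G)\cdot\alpha = 2\alpha\, e(G)$, as desired. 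Hence the acyclic probability is at most $2^{-e(G)}\cdot 2^{2\alpha\, e(G)} = 2^{-e(G)(1-2\alpha)}$.

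Finally, to get the strict inequality claimed in the statement, I would note that the bound of Lemma \ref{lem:number acyc ori degrees} is not tight (for instance, one out-degree sequence, the all-zero one, cannot correspond to an acyclic orientation unless $G$ is edgeless; more simply, not every out-degree sequence is realizable), so the count of acyclic orientations is strictly less than $\prod_v(d_G(v)+1)$ whenever $e(G)\ge 1$, and when $e(G)=0$ there is nothing to prove. I do not anticipate a serious obstacle here; the only mild care needed is the degenerate case $\bar d(G)=0$ (empty edge set), which must be excluded or handled separately since $\alpha$ is then undefined, and making sure the concavity/Jensen step is stated for the correct range of arguments.
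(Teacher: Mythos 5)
Your proposal is correct and follows essentially the same route as the paper: divide the bound of Lemma \ref{lem:number acyc ori degrees} by $2^{e(G)}$ and use concavity of $x\mapsto\log(x+1)$ (Jensen) to get $\prod_v(d_G(v)+1)\le(\bar d(G)+1)^n=2^{2\alpha e(G)}$. Your extra remarks on strictness and the degenerate case $e(G)=0$ are sensible refinements the paper glosses over, but the core argument is identical.
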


\begin{proof}
Let $f(x)=\ln(x+1)$. Since $f'(x)=1/(x+1)$ and $f''(x)=-1/(x+1)^2<0$, we see that $f(x)$ is concave for $x>-1$. Therefore, $\sum_{v\in V(G)} \ln(d_G(v)+1)\le n\ln(\bar{d}(G)+1)$. That is
$$
  \prod_{v\in V(G)} (d_G(v)+1)\le {(\bar{d}(G)+1)^n} = 2^{\alpha n \bar d(G)} = 2^{2\alpha e(G)}.
$$
Since there are $2^{e(G)}$ different orientations of $G$, among which at most $(\bar{d}(G)+1)^n$ are acyclic, the probability that a random orientation of $G$ is acyclic is at most
$2^{2\alpha e(G)}\,2^{-e(G)} = 2^{-e(G)(1-2\alpha)}$.
\end{proof}

We are going to use the following fact.

\begin{fact}
${\lfloor tk \rfloor \choose k} < (et)^k$.
\end{fact}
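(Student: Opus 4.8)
The plan is to bound the binomial coefficient by a crude falling-factorial estimate and then invoke the classical lower bound $k! > (k/e)^k$. First, since $t>1$ (indeed $t>56$ throughout this section), we have $\lfloor tk\rfloor\ge k$, so writing $\binom{\lfloor tk\rfloor}{k} = \frac{1}{k!}\prod_{i=0}^{k-1}(\lfloor tk\rfloor - i)$ displays the left-hand side as a product of $k$ positive factors, each of which is at most $\lfloor tk\rfloor \le tk$. Hence
$$\binom{\lfloor tk\rfloor}{k} \le \frac{(tk)^k}{k!}.$$

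Next I would apply the inequality $k! > (k/e)^k$, which follows immediately from the Taylor expansion $e^k = \sum_{j\ge 0} k^j/j! > k^k/k!$, the inequality being strict because the sum contains further positive terms. Combining the two estimates,
$$\binom{\lfloor tk\rfloor}{k} \le \frac{(tk)^k}{k!} < \frac{(tk)^k}{(k/e)^k} = (et)^k,$$
which is exactly the claimed bound.

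There is no real obstacle in this argument; it is a one-line estimate once the right auxiliary inequality is recalled. The only point worth a remark is the degenerate range: if one does not wish to assume $t>1$, then whenever $\lfloor tk\rfloor < k$ the left-hand side is $0 < (et)^k$ and the inequality holds trivially, so nothing is lost. Strictness of the stated inequality is retained in every case, since $k! > (k/e)^k$ holds strictly for all integers $k\ge 1$.
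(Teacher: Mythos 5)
Your argument is correct and is essentially the same as the paper's: both bound $\binom{\lfloor tk\rfloor}{k}$ by $(tk)^k/k!$ and then apply $k!>(k/e)^k$, the only cosmetic difference being that you derive the latter from the series for $e^k$ while the paper cites Stirling's formula.
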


\begin{proof}
By Stirling's Formula, $k!\ge \sqrt{2\pi k}\,(\frac ke)^k > (\frac ke)^k$. Therefore we have
$${\lfloor tk \rfloor \choose k} \le \frac{tk(tk-1)\cdots(tk-k+1)}{k!} < \frac{(tk)^k}{(\frac ke)^k}=(et)^k.$$
\end{proof}

Now we are ready to prove Lemma~\ref{LEM:PDSC}.

\medskip

\begin{proof}[Proof of Lemma~\ref{LEM:PDSC}]
Let $A$ be a $t$-principal vertex-set of cardinality $k$ and with average degree at least $d$. By Corollary~\ref{COR:acyclic} applied to the graph $G[A]$, the probability that a random orientation of $G$ is acyclic on $A$ is small:
\begin{equation}
  Pr(A\mbox{ is ayclic})\le 2^{-e(G[A])(1-2\alpha)} = 2^{-k\bar{d}(A)(1-2\alpha)/2},
  \label{eq:Pr acyclic L2.1}
\end{equation}
where $\alpha = \log(\bar d(A) + 1) / \bar d(A)$. Since $\bar d(A) \ge d$, we also have the following inequality:
\begin{equation}
  \tfrac{1}{2}\,k\bar{d}(A)(1-2\alpha) \ge \tfrac{1}{2}\,kd - k \log(d+1).
  \label{eq:d and bar d}
\end{equation}

As a $t$-principal set, $A$ is a $k$-set contained in $V_{kt}$. Thus, there are at most ${\lfloor tk \rfloor \choose k}$ $t$-principal sets with $k$ elements. By (\ref{eq:Pr acyclic L2.1}) and (\ref{eq:d and bar d}), the probability that some $t$-principal $k$-set with average degree at least $d$ is acyclic is at most
\begin{align*}
   2^{-kd/2 + k \log(d+1)}{\lfloor tk \rfloor \choose k}
   & < 2^{-kd/2 + k \log(d+1)}(et)^k \\
   & = \bigl(2^{-d/2 + \log(d+1)}\, et \bigr)^k \\
   & = \Bigl(\frac{d+1}{t}\Bigr)^k \le 2^{-k}.
\end{align*}
Therefore, the probability that some $t$-principal set with average degree at least $d$ is acyclic is at most
$$
\sum_{k=1}^n Pr(\exists \mbox{ an acyclic $t$-principal $k$-set $A$ with } \bar{d}(A)\ge d) < \sum_{k=1}^n 2^{-k} < 1.
$$
This implies that there exists an orientation such that no $t$-principal sets of $G$ with average degree at least $d$ is acyclic.
\end{proof}

In the last summation in the above proof, we could also use the fact that $A$ needs to have average degree at least $d$, hence $k=|A|\ge d+1$. Then the probability is less than $2^{-d}(1/2+1/4+\cdots) = 2^{-d} = e^{-2}t^{-4}$. This shows that most orientations give the conclusion of the lemma.

\section{Blow-ups and Kneser graphs}
\label{sect:Kneser}

Theorem \ref{THM:dcn&fcn} raises a question whether the real condition in Conjecture \ref{conj:ENL} should be about the fractional chromatic number instead of the usual chromatic number. It also shows that any possible infinite family of graphs providing a counterexample to the conjecture would have bounded fractional chromatic number (and arbitrarily large chromatic number). Any graph $G$ with $\chi_f(G)\le q$ has a homomorphism into a Kneser graph $K(n,k)$ with $n/k\le q$ (see, e.g., \cite{GoRo}), and thus it is instrumental to check the validity of the Erd\H{o}s and Neumann-Lara conjecture for Kneser graphs. We do this in this section. The results may be of independent interest because of the tools involved in the proofs.

Given a graph $H$, the \emph{blow-up} of $H$ with \emph{power} $m$, denoted by $H^{(m)}$, is the graph obtained from $H$ by replacing each vertex by an independent set of size $m$ (called the \emph{blow-up\/} of the vertex), and for each edge $xy$ in $H$, the two blow-ups of $x$ and $y$ form a complete bipartite graph $K_{m,m}$. The subgraph of $H^{(m)}$ replacing an edge of $H$ is isomorphic to $K_{m,m}$ and will be referred to as the \emph{blow-up} of that edge.

\begin{lem}\label{LEM:BUCOL}
Let $k$ be a positive integer and let $H$ be a graph with $\chi(H)>k$. If there is an orientation $D$ of $H^{(m)}$, such that in the blow-up of any edge, no subgraph isomorphic to $K_{\lceil\frac mk\rceil, \lceil\frac mk\rceil}$ is acyclic, then $\chi(D)>k$.
\end{lem}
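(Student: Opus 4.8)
The plan is to argue by contradiction: suppose $D$ is an orientation of $H^{(m)}$ with the stated property, but $\chi(D) \le k$. Then there is a partition of $V(H^{(m)})$ into $k$ acyclic sets $Z_1,\dots,Z_k$. I would like to use this to produce a proper $k$-coloring of $H$ itself, contradicting $\chi(H) > k$. The obvious obstruction is that a vertex $x$ of $H$ corresponds to a whole blow-up $B_x$ of size $m$, and the sets $Z_i$ need not respect these blow-ups at all — $B_x$ will generally meet several of the $Z_i$.

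The key observation is a pigeonhole step. For each vertex $x$ of $H$, the $m$ vertices of $B_x$ are distributed among the $k$ color classes, so by pigeonhole some class $Z_{c(x)}$ contains at least $\lceil m/k \rceil$ vertices of $B_x$; fix such a choice and call it $c(x) \in \{1,\dots,k\}$. I claim $c$ is a proper coloring of $H$. Suppose not: then there is an edge $xy$ of $H$ with $c(x) = c(y) = i$. By construction, $Z_i$ contains a set $S \subseteq B_x$ with $|S| \ge \lceil m/k\rceil$ and a set $T \subseteq B_y$ with $|T| \ge \lceil m/k\rceil$. Since $xy \in E(H)$, every vertex of $S$ is adjacent (in both orientations' underlying graph, i.e. joined by an arc of $D$) to every vertex of $T$ inside the blow-up of the edge $xy$, which is a copy of $K_{m,m}$. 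Thus $S \cup T$ induces, within the blow-up of $xy$, a subgraph containing $K_{\lceil m/k\rceil, \lceil m/k\rceil}$. By hypothesis no such subgraph is acyclic, so the orientation $D$ restricted to $S \cup T$ contains a directed cycle. But $S \cup T \subseteq Z_i$, contradicting the fact that $Z_i$ is acyclic. Hence $c$ is proper, giving $\chi(H) \le k$, a contradiction.

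The main point to be careful about — really the only subtlety — is the direction of the containment in the pigeonhole/$K_{m,m}$ step: we need the hypothesis to rule out acyclicity not just of a clean copy of $K_{\lceil m/k\rceil,\lceil m/k\rceil}$ but of any subgraph of the edge-blow-up that contains one as a subgraph, since $S\cup T$ might carry extra arcs. This is fine because a digraph containing a non-acyclic subdigraph is itself non-acyclic; so "no subgraph isomorphic to $K_{\lceil m/k\rceil,\lceil m/k\rceil}$ is acyclic" forces any supergraph of such a copy (inside the edge blow-up) to contain a directed cycle as well. With that observation in hand the argument is essentially just the two lines above, so there is no serious obstacle; the content is entirely in setting up the right pigeonhole quantity $\lceil m/k\rceil$ so that it matches the size appearing in the hypothesis.
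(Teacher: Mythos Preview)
Your proof is correct and follows essentially the same approach as the paper: assume a $k$-coloring of $D$, use pigeonhole on each blow-up to define a coloring $c$ of $H$, and derive a contradiction from any monochromatic edge via the hypothesis on $K_{\lceil m/k\rceil,\lceil m/k\rceil}$ subgraphs. Your explicit remark about supergraphs of non-acyclic digraphs being non-acyclic is a helpful clarification that the paper leaves implicit.
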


\begin{proof}
Suppose that $D$ is an orientation of $H^{(m)}$ with the stated property, and that $D$ is $k$-colorable. We are going to define a $k$-coloring of $H$ as follows. For each vertex $x$ of $H$, let $X$ be the blow-up of $x$. There is a color $c=c(x)$ that is used for at least $\lceil m/k\rceil$ vertices of $X$ in the $k$-coloring of $D$. Consider an edge $xy\in E(H)$. If $c(x)=c(y)$, let $A$ be the set of vertices in the blow-up of $x$ having color $c(x)$, and let $B$ be the set of vertices in the blow-up of $y$ colored $c(y)$. Then $|A|, |B|\ge \lceil m/k\rceil$. By assumption, $A\cup B$ is not acyclic in $D$. This contradicts the fact that all vertices in $A\cup B$ receive the same color. We conclude that $c(x)\ne c(y)$. This shows that the function $c:V(H)\to [k]$ is a $k$-coloring of $H$. This contradiction completes the proof.
\end{proof}

\begin{lem}\label{LEM:BUCY}
If\/ $2+2\log m\le \lceil m/k\rceil$, and $\chi(H)>k$, then $\DX(H^{(m)})> k$.
\end{lem}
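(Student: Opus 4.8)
The plan is to apply Lemma~\ref{LEM:BUCOL}, so all that is needed is to produce, for the given values of $m$ and $k$, an orientation $D$ of $H^{(m)}$ in which no copy of $K_{\lceil m/k\rceil, \lceil m/k\rceil}$ inside the blow-up of an edge is acyclic; Lemma~\ref{LEM:BUCOL} then immediately gives $\chi(D) > k$, hence $\DX(H^{(m)}) > k$. I would obtain such an orientation by the probabilistic method, orienting every edge of $H^{(m)}$ independently and uniformly at random and showing that with positive probability no bad (acyclic) copy of $K_{r,r}$ appears, where I write $r = \lceil m/k\rceil$ for brevity.

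First I would fix a single edge $xy$ of $H$ and count the relevant substructures: a copy of $K_{r,r}$ in the blow-up of $xy$ is chosen by picking an $r$-subset of the blow-up of $x$ and an $r$-subset of the blow-up of $y$, so there are $\binom{m}{r}^2 \le m^{2r}$ of them (using $\binom{m}{r}\le m^r$). For each such copy $C$, the graph $G[C]$ is $K_{r,r}$, which has $r^2$ edges and $2r$ vertices, so its average degree is exactly $r$. By Corollary~\ref{COR:acyclic} applied to $K_{r,r}$, the probability that the random orientation is acyclic on $C$ is at most $2^{-r^2(1-2\alpha)}$ where $\alpha = \log(r+1)/r$; equivalently this bound is $2^{-r^2 + 2r\log(r+1)}$. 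Multiplying by the number of copies and summing over the (at most $e(H)$, but we only need one edge at a time — actually over all $e(H^{(m)})/\text{edge}$; more simply, union-bound over all edges of $H$) edges, the expected number of bad copies over the whole graph is at most $e(H)\, m^{2r}\, 2^{-r^2 + 2r\log(r+1)}$. To kill the $e(H)$ factor it is cleanest to note $e(H) \le m^{2r}$ is false in general, so instead I would union-bound copy-by-copy: a bad copy lies in the blow-up of some edge, and it suffices that for each edge the probability of a bad copy in its blow-up is less than $1/e(H)$; but even more simply, one shows $m^{2r}\,2^{-r^2+2r\log(r+1)} < 1$ with a comfortable margin and then absorbs $e(H)$ — I would actually just require the per-edge failure probability times $e(H)$ to be below $1$, i.e. it is enough that $2\log m + \log e(H)< r^2/r - 2\log(r+1)$ after dividing... let me instead keep the argument clean by bounding per copy.

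The computational heart is thus to verify $2^{-r^2 + 2r\log(r+1)} \cdot m^{2r} < 1$, i.e. that $r^2 - 2r\log(r+1) > 2r\log m$, i.e. $r - 2\log(r+1) > 2\log m$. Since $r = \lceil m/k\rceil \ge 1$ and $\log(r+1) \le \log(2m) = 1 + \log m$ (as $r \le m$), it suffices that $r - 2(1+\log m) > 2\log m$, i.e. $r > 2 + 4\log m$; but the hypothesis only gives $r \ge 2 + 2\log m$, so I would need to be more careful and use $\log(r+1)\le \log(2r)=1+\log r$, giving the requirement $r - 2\log r - 2 > 2\log m$. Using the hypothesis $r \ge 2 + 2\log m$ one gets $2\log m \le r-2$, so it suffices that $r - 2\log r - 2 > r - 2$, i.e. $\log r < 0$ — which fails. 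This indicates the bound $m^{2r}$ for the number of copies is too lossy; the right count is $\binom{m}{r}^2$, and the needed inequality should compare $r^2$ against $2\log\binom{m}{r} + 2r\log(r+1)$, where $\binom{m}{r}\le (em/r)^r$. That replaces $2\log m$ by $2\log(em/r) = 2 + 2\log(m/r) \le 2 + 2\log m - 2\log r$, and the requirement becomes $r - 2\log(r+1) > 2 + 2\log m - 2\log r$, i.e. roughly $r > 2 + 2\log m + 2\log(r+1) - 2\log r \approx 2 + 2\log m$, which now matches the hypothesis up to llower-order terms. So \textbf{the main obstacle} is doing this estimate honestly: I expect to need the sharper bound $\binom{m}{r}\le (em/r)^r$ rather than $m^r$, together with a careful treatment of the $\log(r+1)$ versus $\log r$ discrepancy and of the $e(H)$ union-bound factor (which is harmless since $\log e(H)$ is dwarfed once we also use that each bad copy forces $r^2 \ge$ many edges, or by simply noting the per-copy bound is exponentially small in $r^2$ while $r$ can be assumed large, else $H^{(m)} = H$ and the claim is trivial). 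Once the inequality $\sum_{\text{copies}} \Pr[\text{copy acyclic}] < 1$ is established, a union bound yields an orientation with no bad copy, and Lemma~\ref{LEM:BUCOL} finishes the proof.
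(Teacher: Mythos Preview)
Your overall plan---apply Lemma~\ref{LEM:BUCOL} and use a random orientation to kill all acyclic copies of $K_{r,r}$ with $r=\lceil m/k\rceil$---is exactly the paper's. But two concrete points prevent your argument from closing.

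First, the $e(H)$ factor you struggle with is a red herring. The blow-ups of distinct edges of $H$ are \emph{edge-disjoint} copies of $K_{m,m}$ inside $H^{(m)}$, so you may orient each one independently. It therefore suffices to show that a single $K_{m,m}$ admits an orientation with no acyclic $K_{r,r}$; no union bound over $E(H)$ is needed.

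Second, and this is the real gap, your use of Corollary~\ref{COR:acyclic} is too lossy. For $K_{r,r}$ it gives at most $(r+1)^{2r}$ acyclic orientations, so your per-copy probability bound is $2^{-r^2+2r\log(r+1)}$. Even with the sharper count $\binom{m}{r}\le(em/r)^r$, the requirement becomes
\[
r>2\log(e)+2\log m+2\log\!\tfrac{r+1}{r},
\]
and since $2\log e\approx 2.886>2$ (you slipped and wrote ``$2$'' here), this is genuinely stronger than the hypothesis $r\ge 2+2\log m$; the inequality does not follow. The fix is to abandon Corollary~\ref{COR:acyclic} in favour of the cruder but here sharper bound: every acyclic orientation of $K_{r,r}$ comes from a linear order of its $2r$ vertices, so there are at most $(2r)!$ of them. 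Then
\[
(2r)!\,\binom{m}{r}^{2}2^{-r^{2}}
=\binom{2r}{r}\Bigl(\tfrac{m!}{(m-r)!}\Bigr)^{2}2^{-r^{2}}
\le\binom{2r}{r}\,m^{2r}2^{-r^{2}}
<2^{2r}m^{2r}2^{-r^{2}},
\]
and this is at most $1$ precisely when $r\ge 2+2\log m$, matching the hypothesis on the nose. The factor $(2r)!$ beats $(r+1)^{2r}$ because it absorbs the $(r!)^{2}$ hidden in $\binom{m}{r}^{2}$ via the identity $(2r)!=\binom{2r}{r}(r!)^{2}$; that cancellation is what makes the constant come out exactly right.
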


\begin{proof}
Let $r = \lceil m/k\rceil$.
By Lemma~\ref{LEM:BUCOL}, it suffices to prove that for $m\ge 4k\log(ke)$, there is an orientation of $H^{(m)}$, such that no copy of $K_{r,r}$ in the blow-up of any edge is acyclic.
It suffices to show that the blow-up of each edge of $H$ has such an orientation, and we will prove that a random orientation will have this property with positive probability.

Let us now consider a random orientation of the blow-up $K\cong K_{m,m}$ of a fixed edge of $H$.
As there are $(2r)!$ permutations of the vertices of $K_{r,r}$, and each acyclic orientation is corresponding to at least one such permutation, the probability that a copy of $K_{r,r}$ in $K$ is acyclic is at most $(2r)!/2^{r^2}$. As there are  ${m\choose r}^2$ copies of $K_{r, r}$ in $K$, the probability that some copy of $K_{r,r}$ is acyclic is at most
$$
  (2r)!\,2^{-r^2}{m\choose r}^2 = {2r\choose r}\left(\frac{m!}{(m-r)!}\right)^2 2^{-r^2} \le
  {2r\choose r} m^{2r} 2^{-r^2} < 2^{-r^2+2r} m^{2r}.
$$
The value on the right-hand side of this inequality is at most 1 if $2+2\log m \le r$. Thus, there is at least one orientation for $K$, such that no copy of $K_{r,r}$ in $K$ is acyclic. This completes the proof.
\end{proof}

A \emph{Kneser graph}, denoted by $KG(n,k)$, is a graph whose vertices correspond to the $k$-element subsets of a set with $n$ elements, and two vertices are adjacent if their corresponding sets are disjoint.
In 1978, Lov\'{a}sz \cite{Lo79} proved that the chromatic number of $KG(n,k)$ is $n-2k+2$, which was conjectured by Kneser in 1955. It is also known that the fractional chromatic number of $KG(n,k)$ is $\frac{n}{k}$.
Thus, Kneser graphs provide examples of graphs, whose fractional chromatic number can be bounded, while their chromatic number would be arbitrarily large.

\begin{thm}\label{THM:KGBU}
Let $n,k,t$, and $x$ be nonnegative integers such that $0<k<n$ and $x<kt$.
The Kneser graph\/ $KG(nt,kt-x)$ contains the blow-up of $KG(n,k)$ with power ${k(t-1)\choose x}$ as a subgraph. Furthermore, when $x<t$, it contains the blow-up of $KG(n,k)$ with power ${kt\choose x}$, and when $x=t$, it contains the blow-up of $KG(n,k)$ with power ${kt\choose x}-k$.
\end{thm}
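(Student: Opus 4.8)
The plan is to exhibit an explicit injective graph homomorphism from the blow-up of $KG(n,k)$ into $KG(nt,kt-x)$; the guiding observation is that ``contains as a subgraph'' only requires edges to be preserved, so the copies of a single vertex are allowed to be mutually adjacent in the host and no intersecting-family (Erd\H{o}s--Ko--Rado-type) condition ever enters. Identify the ground set of $KG(nt,kt-x)$ with $[n]\times[t]$, so its vertices are the $(kt-x)$-subsets of $[n]\times[t]$; for $i\in[n]$ call $\{i\}\times[t]$ the $i$-th \emph{column}, and for a $k$-set $S\subseteq[n]$ write $U_S=S\times[t]$, a set of size $kt$. Each copy of the vertex $S$ of $KG(n,k)$ will be mapped to a set $U_S\setminus R$, where $R$ ranges over a family $\mathcal{R}_S$ of $x$-element subsets of $U_S$ to be chosen; since $|U_S\setminus R|=kt-x$ these are genuine vertices of $KG(nt,kt-x)$, and the image of $\mathcal{R}_S$ will be the blow-up of $S$.

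First I would check that $\phi:(S,R)\mapsto U_S\setminus R$ is an embedding as a subgraph, assuming that all families $\mathcal{R}_S$ share a common cardinality $m$. If $ST$ is an edge of $KG(n,k)$, then $S\cap T=\emptyset$, hence $U_S\cap U_T=\emptyset$, hence every $U_S\setminus R$ is disjoint from every $U_T\setminus R'$ and thus adjacent to it in $KG(nt,kt-x)$: the blow-up of the edge $ST$ is present. For injectivity, distinct choices of $R$ under the same $S$ obviously give distinct images, so the only thing to rule out is $U_S\setminus R=U_{S'}\setminus R'$ with $S\ne S'$; this is impossible once every image \emph{remembers} its base vertex, for which a convenient sufficient condition is that $U_S\setminus R$ meet every column of $S$ (then its projection to $[n]$ equals $S$). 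So the whole construction reduces to choosing, for each $S$, a family $\mathcal{R}_S$ of $x$-subsets of $U_S$, all of one size $m$, such that $U_S\setminus R$ hits each of the $k$ columns of $S$ for every $R\in\mathcal{R}_S$.

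The three assertions come from three ways of making that choice. \emph{(General bound.)} Fix a transversal $C_S=S\times\{1\}$, one element per column, and let $\mathcal{R}_S$ be all $x$-subsets of $U_S\setminus C_S$; every image then contains $C_S$ and so meets every column, and $m=\binom{kt-k}{x}=\binom{k(t-1)}{x}$. \emph{(Case $x<t$.)} An $x$-set is too small to contain a whole column, so $U_S\setminus R$ meets every column automatically; take $\mathcal{R}_S$ to be all $x$-subsets of $U_S$, so $m=\binom{kt}{x}$. \emph{(Case $x=t$.)} Now $R$ empties a column only if $R$ \emph{is} a column, and there are exactly $k$ such $R$'s; discard them and let $\mathcal{R}_S$ be the remaining $t$-subsets of $U_S$, so $m=\binom{kt}{t}-k$. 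In each case $\phi$ is an embedding by the previous paragraph, which proves the theorem; the degenerate possibility $x>k(t-1)$ makes all three claimed powers $0$ and is vacuous, and $x=t$ forces $k\ge2$ because $x<kt$. I do not anticipate a real obstacle: the content is entirely the reduction of the second paragraph, and the only delicate point is that an image can fail to determine its base vertex exactly when a whole column of $S$ is deleted --- which is precisely what the transversal device prevents in general and what costs $k$ sets when $x=t$. Isolating those two situations is the single substantive step; the rest is one-line counting.
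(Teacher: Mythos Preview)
Your proof is correct and follows essentially the same approach as the paper: both identify the ground set of $KG(nt,kt-x)$ with $[n]\times[t]$, send a $k$-set $S$ to the family of $(kt-x)$-subsets of $S\times[t]$ whose projection to $[n]$ is exactly $S$, and handle the three cases by the same counting (all subsets containing the transversal $S\times\{1\}$; all subsets when $x<t$; all but the $k$ column-complements when $x=t$). The paper states the projection map $f$ directly while you frame it as removing an $x$-set $R$ and requiring each column to be hit, but this is only a cosmetic difference.
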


\begin{proof}
Let $G=KG(nt, kt-x)$ and $H=KG(n,k)$. We can consider the vertices of $G$ as the $(kt-x)$-subsets of $[n]\times [t]$, and the vertices of $H$ as the $k$-subsets of $[n]$. Given a vertex $A \in V(G)$, let $f(A)=\{a\in [n]: (a,b)\in A \mbox{ for some } b\in [t]\}$; i.e., $f$ is the projection from $[n]\times [t]$ to $[n]$.

Notice that for two vertices $A$ and $B$ of $G$, if $f(A)\cap f(B)=\emptyset$, then $A\cap B=\emptyset$. In particular, if $|f(A)|=|f(B)|=k$ and $f(A)$ and $f(B)$ are adjacent in $H$, then $A$ and $B$ are adjacent in $G$.
For every vertex $X\in V(H)$, $X$ is a $k$-set. If $f(A)=X$ for some vertex in $G$, then $A$ is a $(kt-x)$-subset of $X\times [t]$.

If $x<t$, then for every $(kt-x)$-subset  $A$ of $X\times [t]$, we have $f(A)=X$. So there are ${kt\choose x}$ vertices in $G$ that map to $X$ by $f$. This shows that $G$ contains a  blow-up of $H$ with power ${kt\choose x}$.

If $x=t$, among the $(kt-x)$-subsets of $X\times [t]$, only $k$ of them have $f(A)\not= X$, and each misses one element of $X$. Therefore, $G$ contains a blow-up of $H$ with power ${kt\choose x}-k$.

In general, we can consider those $(kt-x)$-subsets of $X\times [t]$, which contain all the elements in $X\times \{1\}$. There are ${k(t-1)\choose x}$ such subsets, and each of them is mapped to $X$ by $f$. Therefore, $G$ contains a blow-up of $H$ with power ${k(t-1)\choose x}$.
\end{proof}

Erd\H{o}s and Neumann-Lara (see \cite{ENL}) proved that
\begin{equation}
    \DX(K_n)\ge \frac {n}{2\log(n)}.
\label{eq:ENL_Kn_1/2}
\end{equation}
This result can be extended to blow-ups of complete graphs as follows.

\begin{thm}\label{THM:BUCG}
$\DX(K_n^{(k)}) > \min\{\frac{nk}{4\log(nk)}, \frac n2\}$.
\end{thm}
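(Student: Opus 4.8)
The plan is to construct an orientation $D$ of $K_n^{(k)}$ in which every acyclic vertex-set is small, and then read off a lower bound on $\chi(D)$ from the fact that $K_n^{(k)}$ has $nk$ vertices: if every acyclic set of $D$ has at most $a$ vertices, then $\chi(D)\ge nk/a$. Since $K_n^{(k)}$ is the complete $n$-partite graph with all parts of size $k$, and since $\min\{\frac{nk}{4\log(nk)},\frac{n}{2}\}=\frac{nk}{\max\{4\log(nk),\,2k\}}$, it suffices to find $D$ in which every acyclic set has fewer than $\max\{4\log(nk),\,2k\}$ vertices. I would obtain such a $D$ by the first moment method, orienting each edge independently and uniformly at random, in the spirit of the Erd\H{o}s and Neumann-Lara argument behind \eqref{eq:ENL_Kn_1/2}.

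The heart of the proof is a pair of estimates for a fixed $s$-element vertex-set $S$. Let $s_1,\dots,s_t\le k$ be the sizes of the intersections of $S$ with the parts, so $G[S]$ is complete multipartite. Since $s_i^2\le ks_i$,
$$e(G[S])=\tfrac12\Bigl(s^2-\sum_i s_i^2\Bigr) \ge \tfrac12\Bigl(s^2-k\sum_i s_i\Bigr)=\tfrac12\,s(s-k),$$
and, because every acyclic orientation of $G[S]$ is induced by at least one linear order of $S$, the graph $G[S]$ has at most $s!$ acyclic orientations (a bound of the same flavour as Lemma~\ref{lem:number acyc ori degrees}, though here this cruder version is enough). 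Hence, for a random orientation, $\Pr[S\text{ is acyclic}]\le s!\,2^{-s(s-k)/2}$; as there are $\binom{nk}{s}\le(nk)^s/s!$ sets of size $s$, the expected number of acyclic $s$-sets is at most
$$\binom{nk}{s}\,s!\,2^{-s(s-k)/2} \le (nk)^s\,2^{-s(s-k)/2} = 2^{\,s(\log(nk)-(s-k)/2)}.$$

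This bound is below $1$ once $s-k>2\log(nk)$, and as $s$ grows past this point the exponent decreases by more than $1$ at each step, so the sum of these expectations over all larger $s$ stays below $1$ (for $nk$ exceeding a small absolute constant; the few remaining graphs are trivial). Thus some orientation $D$ has no acyclic set with more than about $k+2\log(nk)$ vertices, so $\chi(D)\ge\frac{nk}{k+2\log(nk)}$. Now distinguish two cases: if $k\le 2\log(nk)$ then $k+2\log(nk)\le 4\log(nk)$, and if $k\ge 2\log(nk)$ then $k+2\log(nk)\le 2k$; in either case $\chi(D)\ge\frac{nk}{\max\{4\log(nk),\,2k\}}=\min\{\frac{nk}{4\log(nk)},\frac{n}{2}\}$, and the inequality is strict except when $k$ is very close to $2\log(nk)$.

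The two ingredients — the edge estimate $e(G[S])\ge\tfrac12 s(s-k)$ and the first-moment computation — are routine. I expect the real work to be the careful tracking of floors and of the exact threshold so as to land precisely on the constants $4\log(nk)$ and $\frac{n}{2}$ with a strict inequality. The delicate regime is $k\approx 2\log(nk)$, where the two terms of the minimum coincide and all the estimates above become simultaneously tight; there one must sharpen the count of acyclic orientations of $G[S]$ (which is far below $s!$ precisely when the parts $s_i$ are large, i.e.\ when $e(G[S])$ is small) and play it against the edge count, or else verify the handful of extremal pairs $(n,k)$ directly.
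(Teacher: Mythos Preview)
Your approach is essentially the paper's: random orientation, the edge bound $e(G[S])\ge\tfrac12 s(s-k)$ for a complete multipartite graph, the crude $s!$ bound on acyclic orientations, and a first-moment count capped by $(nk\cdot 2^{-(s-k)/2})^s$.

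The one place where the paper is cleaner is exactly the spot you flag as delicate. Rather than summing over all $s$ above a threshold and then worrying about the boundary $k\approx 2\log(nk)$, the paper fixes the single integer $t=\max\{\lceil 4\log(nk)\rceil,\,2k\}$, so that $t-k\ge 2\log(nk)$ automatically, and shows the expected number of acyclic $t$-sets is strictly below $1$ (the strictness coming from $\binom{nk}{t}t!<(nk)^t$). Then every acyclic set has at most $t-1$ vertices, giving $\chi(D)\ge \frac{nk}{t-1}>\frac{nk}{t}\ge \min\{\frac{nk}{4\log(nk)},\frac{n}{2}\}$. The ``$t-1$ in the denominator'' is what buys the strict inequality for free; no sharpening of the acyclic-orientation count and no separate treatment of the regime $k\approx 2\log(nk)$ is needed.
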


\begin{proof}
Let $G = K_n^{(k)}$, and let $t=\max\{\lceil 4\log(nk)\rceil, 2k\}$. It suffices to show that there is an orientation of $G$ such that every vertex-set of size $t$ has a directed cycle.

Let $A\subseteq V(G)$ be a set of size $t$. As $G$ is a balanced complete $n$-partite graph and each part has size $k$, each vertex in $A$ has at least $t-k$ neighbors in $A$. Therefore, $G[A]$ has at least $\frac{t(t-k)}{2}$ edges. Among all orientations of $G[A]$, at most $t!$ of them are acylic.
Thus, $Pr(A \mbox{ is acyclic})\le t!\, 2^{-t(t-k)/2}$. There are ${nk\choose t}$ such set. It follows that with a random orientation of $G$ we have the following:
\begin{eqnarray}
Pr(\exists A: |A|=t \mbox{ and } A \mbox{ is acyclic}) &\le& {nk\choose t}\,t!\, 2^{-t(t-k)/2}\nonumber\\
&<& \bigl(nk 2^{-(t-k)/2}\bigr)^t.\label{eq:Pr acyclic}
\end{eqnarray}
Since $t=\max\{\lceil 4\log(nk)\rceil, 2k\}$, we have $t-k\ge 2\log(nk)$. Therefore, $nk 2^{-(t-k)/2} \le 1$.
This implies that the value on the right-hand side of (\ref{eq:Pr acyclic}) is at most 1. Consequently,
there is at least one orientation $D$ of $G$ such that any set of size $t$ has a directed cycle. Therefore, in a proper coloring of $D$, each color class has size at most $t-1$. Hence $\DX(K_n^{(k)})\ge \chi(D)\ge \frac{nk}{t-1} > \min\{\frac{nk}{4\log(nk)}, \frac n2\}$.
\end{proof}

\begin{thm}
\label{thm:DX Kneser}
For any positive integers $n,k$ with $n\ge 2k$, we have
$$\DX(KG(n,k)) \ge \left\lfloor \frac{n-2k+2}{8\log(n/k)} \right\rfloor.$$
\end{thm}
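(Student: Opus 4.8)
The plan is to combine Theorem~\ref{THM:KGBU} (Kneser graphs contain large blow-ups of smaller Kneser graphs), Lemma~\ref{LEM:BUCY} (blow-ups preserve large dichromatic number provided the blow-up power is exponentially large compared to $k$), and the Lov\'asz formula $\chi(KG(n,k))=n-2k+2$, together with the fact that $KG(n,k)$ itself is an induced subgraph of $K_N^{(k)}$ where $N=\binom{n}{k}$ so that $\DX(KG(n,k))$ is at least the value one extracts directly from a good orientation, but the cleaner route is via the blow-up reduction. Concretely, I would pick an integer $t$ (a ``scaling parameter'') and set $x$ so that $KG(nt, kt-x)$ contains a blow-up $H^{(M)}$ of $H:=KG(n,k)$ with $M=\binom{k(t-1)}{x}$ (or the slightly larger powers when $x<t$). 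Since $\chi(H)=n-2k+2$, if $M$ is large enough that Lemma~\ref{LEM:BUCY}'s hypothesis $2+2\log M\le \lceil M/(\chi(H)-1)\rceil$ holds, then $\DX(H^{(M)})\ge \chi(H)=n-2k+2$. But $H^{(M)}$ sits inside $KG(nt,kt-x)$, and the dichromatic number is monotone under taking subgraphs, so $\DX(KG(nt,kt-x))\ge n-2k+2$. The final step is to \emph{invert} this: given a target Kneser graph $KG(N,K)$, choose $t,k,x$ with $N=nt$, $K=kt-x$ realizing the above, and read off the promised bound $\lfloor (N-2K+2)/(8\log(N/K))\rfloor$ after noting $N-2K+2 = t(n-2k)+2x+2$ and $N/K = nt/(kt-x)\ge n/k$ roughly.

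The key steps, in order: (1) Fix the Kneser graph $KG(n,k)$ we want a bound for; rename its parameters and look for a factorization making it the host of a blow-up. Set $a=n-2k+2=\chi(KG(n,k))$; the target is $\DX\ge \lfloor a/(8\log(n/k))\rfloor$. (2) Choose a ``base'' Kneser graph $KG(n_0,k_0)$ with $\chi(KG(n_0,k_0))=n_0-2k_0+2$ equal to (or just above) the desired dichromatic bound, and choose a scaling $t$ and an offset $x$ so that $n=n_0 t$ and $k = k_0 t - x$ — i.e. run Theorem~\ref{THM:KGBU} with roles reversed, writing the \emph{given} Kneser graph as $KG(n_0 t, k_0 t - x)$. (3) Verify the blow-up power $M$ (one of $\binom{k_0(t-1)}{x}$, $\binom{k_0 t}{x}$, or $\binom{k_0 t}{x}-k_0$) is large enough — here $M$ grows roughly like $t^x$ or exponentially, and the hypothesis of Lemma~\ref{LEM:BUCY} needs $M \gtrsim 2(\chi_0-1)\log M$ where $\chi_0 = n_0-2k_0+2$; picking $x$ around $t$ and $t$ of order $\log$ should make this comfortably true. (4) Apply Lemma~\ref{LEM:BUCY} to get $\DX(KG(n_0,k_0)^{(M)})>\chi_0-1$, hence $\DX(KG(n,k))\ge \DX(KG(n_0,k_0)^{(M)}) \ge \chi_0$. (5) Optimize the choice of $n_0,k_0,t,x$ to push $\chi_0$ as close as possible to $a/(8\log(n/k))$, using $n/k = n_0 t/(k_0 t - x)$ and the crude estimate $a = t(n_0-2k_0) + 2x + 2 \ge t(\chi_0 - 2) + 2$, so $\chi_0 \approx a/t$ and we want $t\approx 8\log(n/k)$.

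The main obstacle will be step (5): the bookkeeping to choose $n_0,k_0,t,x$ simultaneously so that (i) the arithmetic identities $n=n_0 t$, $k=k_0 t - x$ are satisfiable for the \emph{given} $n,k$ (divisibility of $n$ by $t$ is not free — one may need to decrease $n$ slightly to $n_0 t \le n$, which only helps since $\DX$ is monotone, but then one must recheck that $n_0 - 2k_0 + 2$ stays large and $n_0/k_0$ stays controlled), (ii) the blow-up power $M$ clears the threshold in Lemma~\ref{LEM:BUCY} — this pins down roughly how large $x$ must be and competes with wanting $x$ small so that $2x+2$ doesn't dominate and distort the ratio $a/t$, and (iii) the resulting ratio $\chi_0 \ge \lfloor a/(8\log(n/k))\rfloor$ holds with the specific constant $8$ and base-$2$ logarithm, which is where the factor of $4$ slack between the $2\log$ in Lemma~\ref{LEM:BUCY}'s threshold and the $8\log$ in the statement gets spent. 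I expect the cleanest execution takes $t=2^{\lceil \log\log\cdots\rceil}$-type values or simply $t=\lceil 8\log(n/k)\rceil$ adjusted for divisibility, $x=t$ (using the $\binom{kt}{x}-k$ blow-up power, which is still huge), and $k_0=k$ effectively — but verifying the chain of inequalities cleanly is the real work.
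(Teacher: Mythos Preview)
Your high-level strategy is the same as the paper's: realize $KG(n,k)$ as the host of a blow-up $H^{(M)}$ of a smaller Kneser graph $H$ via Theorem~\ref{THM:KGBU}, check $\chi(H)\ge z:=\lfloor (n-2k+2)/(8\log(n/k))\rfloor$, and then invoke a blow-up result to conclude $\DX\ge z$. The gap is that you plan to use only Lemma~\ref{LEM:BUCY}, and this cannot work across the full range of parameters. Lemma~\ref{LEM:BUCY} needs $\lceil M/z\rceil \ge 2+2\log M$, i.e.\ the blow-up power must be large compared to the target $z$. But when $k$ is small relative to $\log(n/k)$ the Kneser graph simply does not contain blow-ups with large enough power: in the extreme case $k=1$ we have $KG(n,1)=K_n$, which contains no $H^{(M)}$ with $M\ge 2$ at all, while $z\approx n/(8\log n)$ is huge. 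Your suggested parameters ($t\approx 8\log(n/k)$, $x\approx t$, $k_0\approx k$) run straight into this: for small $k$ the binomial $\binom{k_0 t}{x}$ is far too small, and no other choice of $t,x$ rescues it.

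The paper deals with this by splitting into two regimes. When $k\le 2\log(n/k)$ it takes $t=2k$, $x=k$ so that the base graph is the \emph{complete} graph $H_1=K_{\lfloor n/(2k)\rfloor}$ with blow-up power $\binom{2k}{k}$, and then applies the separate Theorem~\ref{THM:BUCG} (a direct probabilistic bound $\DX(K_n^{(m)})>\min\{nm/(4\log nm),\,n/2\}$) rather than Lemma~\ref{LEM:BUCY}. Only when $k>2\log(n/k)$ does it use Lemma~\ref{LEM:BUCY}, and there it takes \emph{small fixed} $t$ (namely $t=3$ when $n\ge 4k$ and $t=2$ when $n<4k$), so that the base graph $H$ is $KG(\lfloor n/3\rfloor,\lfloor k/2\rfloor)$ or $KG(\lfloor n/2\rfloor,\lfloor k/2\rfloor+2)$; the point is that in this regime $k$ itself is large, so $\binom{2\lfloor k/2\rfloor}{\lfloor k/2\rfloor}$-type powers are exponential in $k$ and dominate $z\le 2^{k/2-2}$. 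Your inversion idea with a single floating $t$ does not capture this dichotomy; you need the complete-graph-blow-up tool (Theorem~\ref{THM:BUCG}) for the small-$k$ side.
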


\begin{proof}
Let $G=KG(n,k)$. Observe that $\DX(G)=1$ only when the graph is a forest. In our case this happens only when $n=2k$, and the inequality holds in this case. Let $\di{z=\left\lfloor\frac{\chi(G)}{8\log(\chi_f(G))}\right\rfloor=\left\lfloor\frac{n-2k+2}{8\log(n/k)}\right\rfloor}$. We just need to consider the case when $z\ge 2$. Hence, we may assume $\chi(G)=n-2k+2\ge 16$.
For $k\le3$, $G$ contains the complete graph on $\lfloor\frac nk \rfloor$ vertices. By using (\ref{eq:ENL_Kn_1/2}), it is easy to see that the theorem holds in this case. Thus, we may assume that $k\ge4$.

Let $H_1=KG(\lfloor\frac{n}{2k}\rfloor,1)=K_{\lfloor\frac n{2k}\rfloor}$. Theorem~\ref{THM:KGBU} with $t=2k$ and $x=k$ implies that $KG(2k\lfloor\frac{n}{2k}\rfloor,2k-k)$ contains the blow-up of $H_1$ with power ${2k\choose k}$. Since $n\ge 2k\lfloor\frac{n}{2k}\rfloor =: n'$, $G$ contains $KG(n',k)$, and hence also the blow-up of $H_1$ with power ${2k\choose k}$. By Theorem~\ref{THM:BUCG}, we have
\begin{equation}
  \DX(G) > \min\left\{\frac{1}{2}\left\lfloor\frac n{2k}\right\rfloor, \frac{{2k\choose k}\left\lfloor\frac n{2k}\right\rfloor}{4\log\left({2k\choose k}\lfloor\frac n{2k}\rfloor\right)}\right\}.
  \label{eq:H1 blow-up}
\end{equation}

Suppose first that $k\le 2\log (n/k)$. If the minimum in (\ref{eq:H1 blow-up}) is attained with the first term, then we have $\DX(G) > \frac{1}{2}\left\lfloor\frac n{2k}\right\rfloor \ge \frac{n-2k+1}{4k}$. Since $\DX(G)$ is an integer, we conclude that
$\DX(G) \ge \frac{n-2k+2}{4k} \ge \frac{n-2k+2}{8\log(n/k)} \ge z$ as claimed.
If the minimum is attained with the second term, then noting that ${2k\choose k}\ge 4k$, we see that
$$
  \DX(G) \ge
  \frac{4k\left\lfloor\frac n{2k}\right\rfloor}{4\log\left(4k\lfloor\frac n{2k}\rfloor\right)} \ge
  \frac{n}{4\log(2n)} = \frac{n}{4\log(n/k)+4\log(k/2)} \ge \frac{n}{8\log(n/k)} \ge z.
$$

Suppose now that $k>2\log (n/k)$, that is $n<k2^{k/2}$. To simplify notation, we also set $r := \lfloor \frac{k}{2} \rfloor$. We will distinguish two cases.

Suppose first that $n\ge 4k$. The result is clear if $z\le2$. Otherwise, $z\ge 3$ and we have $n-2k+2 \ge 24\log(n/k)\ge 48$. This contradicts the assumption that $n < k 2^{k/2}$ if $k\le6$. If $k=7$, then $n\ge 60$ and hence $\frac{n}{k}\ge 8$. In particular, $G$ contains $K_7$ as a subgraph. Since $\DX(K_7)=3$ \cite{NL94}, we may assume that $z\ge 4$; otherwise the theorem is proved. However, if $z\ge4$ and $\frac{n}{k}\ge 8$, then $n-2k+2\ge 32\log(n/k) \ge 96$. Again, this contradicts the assumption that $n < k 2^{k/2}$. We need to do one more special case, when $k=9$. In this case, we conclude as above, first that $n\ge64$, next that $G$ contains $K_7$ and that we may assume that $z\ge4$. This implies that $n\ge 16 + 32 \log(n/9)$. The smallest $n$ for which this inequality is satisfied is $128$. Therefore $G$ contains the complete graph $K_{14}$ as a subgraph. It is known \cite{NL94} that $\DX(K_{11})=4$, thus we may assume henceforth that $z\ge5$. In this case, the inequality for $z$ becomes $n\ge 16 + 40\log(n/9)$. The smallest $n$ for which this is satisfied is $193$. Thus, $193 \le n\le 203 = \lfloor k 2^{k/2}\rfloor$. In this case we may use (\ref{eq:H1 blow-up}) which shows that $\DX(G)>5$. The assumption that $z\ge 6$ now gives a contradiction to the fact that $n\le 203$.

From now on we may assume that either $k=8$ or $k\ge 10$.
Let $H_2=KG(\lfloor\frac n3\rfloor, r)$. By Theorem~\ref{THM:KGBU} with $t=3$ and $x=r$ (when $k$ is even) or $x=r-1$ (when $k$ is odd), we conclude that $G$ contains a blow-up of $H_2$ with power $m = {2r \choose x}$.

Our goal here is to apply Lemma~\ref{LEM:BUCY}, so we need to argue that $2+2\log m\le \lceil\frac mz\rceil$. We can estimate $z$ as follows:
$$
  z\le \frac{n}{8\log(n/k)} < \frac{k2^{k/2}}{8\log(\frac{2^{k/2}k}k)}=2^{\frac k2 - 2}.
$$
Thus, it suffices to see that
\begin{equation}
    2+2\log m \le \bigl\lceil\, m/\lfloor 2^{\frac k2 - 2} \rfloor \,\bigr\rceil.
\label{eq:needed for Lemma 3.2}
\end{equation}

Inequality (\ref{eq:needed for Lemma 3.2}) can be proved by induction on $k$ as follows. First of all, the inequality holds for $k=8$ and $k=11$ (base cases), which the reader can easily verify. Next, we observe that when we increase $k$ by 2, the left-hand side increases by at most 4. On the other hand, the right-hand side increases by more than 4 when $k\ge8$.

This shows that we can apply Lemma~\ref{LEM:BUCY} when $k=8$ or $k\ge10$.
We do this for every $n$ such that $4k\le n < k 2^{k/2}$. Since $\chi(H_2)=\lfloor\frac {n}3\rfloor-2\lfloor \frac k2\rfloor+2\ge \frac{n-2k+2}{16}\ge \frac{\chi(G)}{8\log(n/k)}\ge z$, we have by Lemma~\ref{LEM:BUCY} that
$\DX(G)\ge z$.

For the remaining subcase, we assume that $n<4k$. Let $H_3=KG(\lfloor\frac n2\rfloor, r+2)$. By Theorem~\ref{THM:KGBU} with $t=2$ (and $x=k-2r+4\ge4$), we see that $G$ contains a blow-up of $H_3$ with power ${r+2\choose 4}$. Observe that
$$
 z = \Bigl\lfloor\frac{n-2k+2}{8\log(n/k)}\Bigr\rfloor \le
 \Bigl\lfloor\frac {4k-2k+2}{8\log (4k/k)}\Bigr\rfloor =
 \Bigl\lfloor\frac{k+1}8\Bigr\rfloor.
$$
By induction on $k$, we easily prove the following:
$$
 2 + 2\log{{r+2}\choose 4} \le
 \Bigl\lceil\, \frac{{{r+2}\choose 4}}{\lfloor \frac{k+1}{8}\rfloor}\,\Bigr\rceil \le
 \Bigl\lceil\, \frac{{{r+2}\choose 4}}{z}\,\Bigr\rceil.
$$
This shows that we can apply Lemma~\ref{LEM:BUCY}. Recall that $n-2k+2\ge 16$. Then,
$\chi(H_3)=\lfloor \frac n2\rfloor - 2r - 2 \ge \lfloor\frac{n-2k+2}{8\log(n/k)}\rfloor = z$.
By Lemma~\ref{LEM:BUCY}, we conclude that $\DX(G)\ge z$.
\end{proof}

Let $n=2k+z-2$ with $k\gg z$, and let $G=KG(n,k)$. Then $\chi(G) = z$ and $\chi_f(G) = \frac{n}{k} < 2+\frac{z}{k}$. By Theorem \ref{thm:DX Kneser}, $\DX(G) \ge \frac{z}{8\log(n/k)} > \frac{z}{16}$.
Thus, we have the following result.

\begin{cor}
For any $\varepsilon>0$ and any integer $t$, there is a graph $G$ with $\chi_f(G)<2+\varepsilon$ and $\DX(G)\ge t$.
\end{cor}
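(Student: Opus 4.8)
The plan is to exhibit the required graphs among Kneser graphs $KG(n,k)$ in the regime where $n$ exceeds $2k$ by only a small additive amount. In that regime the fractional chromatic number $n/k$ hugs $2$ from above, while the ordinary chromatic number $n-2k+2$ can be pushed as high as we like, and Theorem~\ref{thm:DX Kneser} converts the large chromatic number into a large dichromatic number. Concretely, given $\varepsilon>0$ and an integer $t\ge 1$, I would first fix the target chromatic number $z := 13t$ (any integer slightly above $8t\log 3$ works), and then set $n := 2k + z - 2$, leaving $k$ as a large parameter to be chosen at the end. For $G := KG(n,k)$ we then have $\chi(G) = n-2k+2 = z$ by Lov\'asz's theorem, and $\chi_f(G) = n/k = 2 + (z-2)/k$.

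Next I would feed this into Theorem~\ref{thm:DX Kneser}, which yields $\DX(G) \ge \bigl\lfloor (n-2k+2)/(8\log(n/k))\bigr\rfloor = \bigl\lfloor z/(8\log(n/k))\bigr\rfloor$. Provided $k \ge z-2$, we have $n/k = 2 + (z-2)/k \le 3$, so $8\log(n/k) \le 8\log 3 < 13$, and hence $\DX(G) \ge \lfloor z/13\rfloor = t$ by the choice $z = 13t$. Finally, choosing any integer $k > \max\{z-2,\ (z-2)/\varepsilon\}$ simultaneously guarantees $n/k \le 3$ (so the displayed bound on $\DX$ applies) and $(z-2)/k < \varepsilon$, i.e. $\chi_f(G) = 2 + (z-2)/k < 2+\varepsilon$. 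This $G$ satisfies both requirements, proving the corollary.

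There is no genuine obstacle here; the only point that needs a moment's care is that the lower bound on $\DX$ supplied by Theorem~\ref{thm:DX Kneser} carries a $\log(n/k)$ in the denominator, so one must verify that driving $n/k$ toward $2$ in order to control $\chi_f$ does not erode this bound. It does not, because throughout the construction $n/k$ stays inside the bounded interval $[2,3]$, so the denominator is bounded by the constant $8\log 3$. In effect the whole argument is a single one-parameter limit: pin down $z$ first to fix $\DX(G)\ge t$, then let $k\to\infty$ to bring $\chi_f(G)$ below $2+\varepsilon$.
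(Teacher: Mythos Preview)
Your argument is correct and is essentially the same as the paper's: both take $G=KG(n,k)$ with $n=2k+z-2$, use $\chi_f(G)=n/k\to 2$ as $k\to\infty$, and feed $\chi(G)=z$ into Theorem~\ref{thm:DX Kneser} while observing that $\log(n/k)$ stays bounded. The only difference is cosmetic---you bound $n/k\le 3$ and use the constant $8\log 3<13$, whereas the paper tacitly uses $n/k<4$ to get $\DX(G)>z/16$.
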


So $\DX(G)$ is lower bounded by a function of $\chi_f(G)$ but not upper bounded by a function of $\chi_f(G)$.

\end{document}